\documentclass[10pt,bezier]{article}
\usepackage{amsmath,amssymb,amsfonts, euscript}
\usepackage{amscd}
\usepackage{amsthm}
\usepackage{fancybox}
\usepackage[all]{xy}
\textwidth=15cm
\textheight=22cm
\topmargin=0.5cm
\oddsidemargin=0.5cm
\evensidemargin=0.5cm
\voffset -0.5cm

\newtheorem{theorem}{Theorem}[section]

\newtheorem{corollary}[theorem]{Corollary}
\newtheorem{example}[theorem]{Example}
\newtheorem{definition}[theorem]{Definition}
\newtheorem{remark}[theorem]{Remark}

\newtheorem{proposition}[theorem]{Proposition}
\newcommand{\gr}{\mbox{gr}}

\newcommand{\m}{\frak{m}}
\newcommand{\n}{\frak{n}}

\def\Syz{{\rm Syz}}

\def\Lex{{\rm Lex}}

\def\pd{{\rm pd}}

\title{\bf\Large Consecutive  cancellations in Betti numbers of local rings \footnote{  {\it Key words and Phrases:} minimal free resolution, filtered module, associated graded module,  Betti numbers, lexicographic  ideal, standard bases.
2000 Mathematics Subject Classification, Primary 13D02.}}
\author{\large    Maria Evelina  Rossi  and  Leila Sharifan  }
\date{ }
\begin{document}
\maketitle

\begin{center}{\it Department of Mathematics,
University of Genoa,\\ Via Dodecaneso 35, 16146 Genoa, Italy\\
rossim@dima.unige.it}
\end{center}

\begin{center}{\it Faculty of Mathematics and  Computer Science, Amirkabir University of Technology\\
424, Hafez Ave.,  15914 Tehran, Iran   \\
leila-sharifan@aut.ac.ir}
\end{center}

\begin{abstract}
Let $I$ be  a  homogeneous ideal in a polynomial ring $P$ over a
field. By Macaulay's Theorem, there exists a lexicographic ideal $L=
\Lex(I) $ with the same Hilbert function as $I. $  Peeva has proved
that the  Betti numbers of $P/I $  can be obtained from the graded
Betti numbers  of $P/L  $  by a suitable sequence of   consecutive
cancellations. We extend this result to any  ideal $I$ in a regular
local ring $(R,\n) $ by passing through the associated graded ring.
To this purpose it  will be necessary to enlarge the list of the allowed cancellations. Taking  advantage of   Eliahou-Kervaire's  construction, several
applications are presented.  This connection between the graded
perspective and the local one is a new  viewpoint  and we hope it
will be useful for studying the numerical invariants  of classes of local
rings.
\end{abstract}

 \section{Introduction}

For a given ideal  $I$  of a regular local ring $(R, \n), $ it is a
central problem to get information on the homological invariants of
the quotient ring $A=R/I $  under reasonable conditions on the ideal
$I. $ Often,  by means of the Hilbert function of $A, $ we  try to
find information  for  $ A $ being Cohen-Macaulay, Gorenstein  or
having estimated depth. Deeper information can be deduced from a
minimal free resolution of $A$ as an $R$-module.   If $I$ is a
homogeneous ideal in a polynomial ring $ P $ over a field, by
Macaulay's Theorem \cite{M}, there exists a lexicographic ideal $L= \Lex(I) $
with the same Hilbert function as $I.$  A  result by  Bigatti
\cite{Bi}, Hulett \cite{Hu}  and Pardue \cite{P} says that the
graded Betti numbers $\beta_{ij}(P/L)$ are greater than or equal to
the corresponding graded Betti numbers $\beta_{ij}(P/I)$.
Peeva \cite[Theorem 1.1.]{Pe}  proved that  the graded Betti numbers
$\beta_{ij}(P/I)$ can be obtained from the graded Betti numbers
$\beta_{ij}(P/L )$ by a sequence of   {\it zero  consecutive
cancellations}, which are cancellations in the graded Betti numbers
of consecutive homological degrees corresponding to the same shift.

The  first aim of this paper is  to complete Peeva's result   in the local case.
 For any ideal $I$, the Hilbert function of the local ring $A=R/I $ is the
Hilbert function of the associated graded ring   $
gr_{\m}(A):=\oplus_{t\ge 0}\m^t/\m^{t+1}    $ where $\m=\n/I.$ In
particular $ gr_{\m}(A)\simeq P/I^* $ where $I^*$ is the ideal of
the polynomial ring $P=\gr_{\n}(R) $ generated by the initial forms
of the elements of $I.$ Then there exists a  unique lexicographic
ideal $L=\Lex(I) $ such that $P/L $  has the same Hilbert function
as $ A. $ Starting from a graded free resolution of  $ gr_{\m}(A) $
as a $P$-module,  we can build up a free resolution of $A$ as an
$R$-module which is not necessarily minimal.
   It will be enough to enlarge the list of the allowed cancellations on the resolution of $L=Lex(I) $ for getting a resolution of a local ring $R/I. $
   The crucial point will be to  prove that the Betti numbers of $ A $ can be obtained from the graded Betti numbers of   $ gr_{\m}(A) $  by a sequence of  {\it negative consecutive cancelations} (see Theorem \ref{asli}).    Most of the results are presented in the more general setting of the filtrations  on a module $ M$ over a regular local ring $(R, \n).  $ This generality  enables us  to detect the minimal free resolution of the same  module    through the associated graded modules of suitable   good filtrations.       For instance, if    the   ideal $I$ is $2$-generated, then $gr_{\m}(A) $ can be very complicated,  but   it  will be  convenient  to get information about the free resolution of $I$ (as an $R$-module)  by considering the $\n$-adic filtration on $I$ itself as an $R$-module. In this case the associated graded module  $gr_{\n}(I)= \oplus I\n^t/I \n^{t+1} $  has the same Betti numbers as $I $  (see \cite{HRV}).

Taking advantage of  the  Peeva's result,   we   can describe in
Theorem \ref{asli2},  the admissible  consecutive cancellations in
the minimal free resolution of $L=\Lex(I) $ in order to achieve  a
minimal free resolution of the local ring $ A.  $  It will be enough to consider the new cancellations coming from Theorem \ref{asli}.  Here, we present
 several  applications  to find  local rings of homogeneous type  (Corollary \ref{successive degree}), to control  the
 depth of $A$  (see Corollary \ref{Lexi}) and to study  the admissible  Hilbert functions of special classes of  local
 rings (Corollary \ref{gorenstein}). The fact that   the lexicographic ideal $L$ is   stable,  so its minimal  free
 resolution is given by the well known Eliahou-Kervaire's construction, often plays a key
 role in the above results.

 As well as being in the homogeneous context, it should be noted that there are many examples in which the existence of
  possible consecutive cancellations does not imply the existence of an ideal that those cancellations are realized for it.
 This is not the case if we consider a perfect  ideal of codimension two.  Actually, in this situation, we prove that for each sequence of
 zero or negative consecutive  cancellations    on the minimal graded free resolution of $ L, $ we can realize an ideal
  $I $   in the formal series  ring  so  that $L= \Lex(I) $ including such  resolution (see Remark  \ref{due}). Further,
 we will give a short proof to a  well-known  characterization of the admissible Hilbert functions of an  Artinian  Gorenstein  local ring of codimension two.

     The paper presents several examples, all   performed  using CoCoA \cite{C}.

 \vskip 2mm

\section{Preliminaries}

Throughout the paper $(R, \n)$ is a regular local ring with infinite  residue field $k. $ If dim$R=n, $ then the associated graded ring $gr_{\n}(R) $ with respect to the $\n$-adic filtration is the  polynomial ring $P=k[x_1, \dots, x_n].$
 
Let $M$ be a finitely generated $R$-module.  We say, according to
the notation in \cite{RV}, that a filtration of submodules
$\mathbb{M} = \{ M_n\}_{n\ge 0}  $ on $M$ is  an $\n$-filtration if
$\n M_n \subseteq M_{n+1} $ for every $n \ge 0, $ and a good (or
stable) $\n$-filtration if $\n M_n =  M_{n+1} $ for all sufficiently
large $n.$  In the following a {\it{ filtered module}} $M$ will be
always an $R$-module equipped with a good $\n$-filtration
$\mathbb{M}.$

If    $\mathbb{M}=\{M_j\}$ is an  $\n$-filtration of $M$, define
$$gr_{\mathbb{M}}(M)=\bigoplus_{j\ge 0}(M_j/M_{j+1})$$ which is a graded
$gr_{\n}(R)$-module in a natural way. It is called the {\bf
associated graded module} to the filtration $\mathbb{M} $ and, for
short, it could   also be denoted by $M^*.$ To avoid triviality, we
assume that $gr_{\mathbb{M}}(M)$ is not zero or equivalently $M \not
= 0.$

\vskip 2mm

 If $m \in M\setminus\{0\}, $ we denote by $\nu_{\mathbb{M}}(m) $ the largest integer $p$ such that  $ m \in M_p $ (the so-called valuation of $m$ with respect to $\mathbb{M}) $ and we denote by $m^* $ or $gr_{\mathbb{M}}(m) $ the residue class of $m$ in $M_p/M_{p+1} $ where $p= \nu_{\mathbb{M}}(m)$ and call it the initial form of $m$ with respect to ${\mathbb{M}}$.   If $m=0, $ we set   $\nu_{\mathbb{M}}(m)= + \infty. $

\vskip 2mm
 If $N$ is a submodule of $M,$ by Artin-Rees Lemma, the sequence $\{N\cap M_j\ | \ j\ge 0\}$ is a good $\n$-filtration
 of $N$.  Since  \begin{equation}\label{N}  (N\cap M_j)/(N\cap M_{j+1})\simeq (N\cap M_j+M_{j+1})/M_{j+1}, \end{equation}   $gr_{\mathbb{M}}(N)$ is a graded submodule of $gr_{\mathbb{M}}(M). $

   Using (\ref{N}), it is clear that  $gr_{\mathbb{M}}(N)$ is generated by the elements $x^* $ with $x \in N.$ We write $$ gr_{\mathbb{M}}(N) =< x^* \   : \ x \in N>.$$
   On the other hand,  it is clear that $\{(N+M_j)/N \ | \ j\ge 0\}$ is a good $\n$-filtration of $M/N$ which we denote  by $\mathbb{M}/N.$ These graded modules are related by the graded isomorphism $$gr_{\mathbb{M}/N}(M/N)\simeq gr_{\mathbb{M}}(M)/gr_{\mathbb{M}}(N).$$
   \vskip 3mm

For a given filtered module $M$, we recall that an element $g \in M$
is a {\it{lifting}}  of an element $h \in gr_{\mathbb{M}}(M) $ if
$gr_{\mathbb{M}}(g)=h.$

The morphism of filtered modules $f : M \to N $ ( $f(M_p) \subseteq
N_p $ for every $p$) clearly induces a morphism of graded
$gr_{\n}(R)$-modules $$gr(f) :  gr_{\mathbb{M}}(M) \to
gr_{\mathbb{N}}(N).$$ Then  $ gr_{\mathbb{M}}(.) $ is a functor from
the category of filtered $R$-modules into the category of the graded
$gr_{\n}(R)$-modules.  Furthermore, we have a canonical embedding $
gr_{\mathbb{M}}(Ker f)  \to Ker( gr(f)).$

 Let $F= \oplus_{i=1}^s R e_i $ be a free $R$-module of rank $s $ and $\nu_1, \dots, \nu_s $ be integers. We define the filtration $\mathbb{F} = \{F_p : p \in {\bf{Z}} \} $ on $F$ as follows
 $$F_p := \oplus_{i=1}^s \n^{p- \nu_i} e_i = \{(a_1, \dots , a_s) : a_i \in \n^{p- \nu_i} \}.$$
 From now on, we denote the filtered free $R$-module $ F$ by $ \oplus_{i=1}^s R(-\nu_i) $ and we call it  a {\it{ special filtration}} on $F.$ If  $({\bf{F.}}, \delta .) $ is a complex of finitely generated free $R$-modules, a special filtration on  {\bf{F.}}  is a special filtration on each $F_i $ that makes $({\bf{F.}}, \delta .) $ a  complex of filtered modules.

  Let $M$ be a   finitely generated  filtered $R$-module  and let  $S=\{f_1, \dots, f_s\} $ be a system of elements  of $M $ and   $ \nu_{\mathbb{M}}(f_i) $  be the corresponding valuations.  As before  let $F =  \oplus_{i=1}^s R e_i $ be a free $R$-module of rank $s$ equipped with the filtration  $\mathbb{F}    $ where  $\nu_i=  \nu_{\mathbb{M}}(f_i).$   Then  we denote the filtered free $R$-module $F $ by $ \oplus_{i=1}^s R(-  \nu_{\mathbb{M}}(f_i) ), $ hence $\nu_{\mathbb{F} }(e_i)=  \nu_{\mathbb{M}}(f_i).$

 Let $\phi  : F  \to M $ be a morphism of filtered $R$-modules   defined by $$\phi  (e_i)= f_i. $$
 Denote by $Syz (S) $ the submodule of $F $ generated by the first syzygies of $f_1, \dots, f_s, $ then
 $\Syz(S) =$Ker$ \phi. $

\begin{definition}
Let $M$ be a filtered module. A subset $S=\{f_1,..,f_s\}$ of $M$ is called a standard basis of $M$ if $gr_{\mathbb{M}}(M)=\langle f_1^*,...,f_s^*\rangle.$  \end{definition}

 By following the initial idea of Robbiano and Valla in \cite{RoV}, Shibuta in \cite{Sh}  characterized
 the  standard bases of a filtered module    as   follows:

\begin{theorem}\label{esiste}Let  $M$ be a filtered $R$-module, $f_1,\dots,f_s \in M$ and $S=\{ f_1,\dots,f_s\} $.
The following facts are equivalent:
\begin{enumerate}\item  $\{f_1,\dots,f_s\}$ is  a   standard basis of $M.$
\item $\{f_1,\dots,f_s\}$ generates $M $ and every element of $ \Syz(\langle f_1^*,...,f_s^*\rangle) $ can be lifted to an element  in $Syz(S).$
\item $\{f_1,\dots,f_s\}$ generates $M $ and $  \Syz(\langle f_1^*,...,f_s^*\rangle) = gr_{\mathbb{F}}(\Syz(S)).$
 \end{enumerate}
\end{theorem}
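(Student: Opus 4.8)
The plan is to establish the cycle of implications $(1)\Rightarrow(3)\Rightarrow(2)\Rightarrow(1)$, since $(3)$ is the strongest and cleanest statement while $(2)$ is the most convenient hypothesis to feed back in. Throughout, write $F=\oplus_{i=1}^s Re_i$ equipped with the special filtration $\mathbb F=\oplus_{i=1}^s R(-\nu_{\mathbb M}(f_i))$ and let $\phi:F\to M$ be the filtered morphism $e_i\mapsto f_i$, so that $\Syz(S)=\ker\phi$. The map $gr(\phi):gr_{\mathbb F}(F)\to gr_{\mathbb M}(M)$ sends the generator $e_i^*$ of $gr_{\mathbb F}(F)$ (sitting in degree $\nu_{\mathbb M}(f_i)$) to $f_i^*$; thus $\operatorname{im}(gr(\phi))=\langle f_1^*,\dots,f_s^*\rangle$, and this image equals $gr_{\mathbb M}(M)$ precisely when $S$ is a standard basis. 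For $(3)\Rightarrow(2)$ one only has to observe that $gr_{\mathbb F}(\Syz(S))$ consists, by definition, of initial forms $z^*$ of elements $z\in\Syz(S)$, so the equality $\Syz(\langle f_1^*,\dots,f_s^*\rangle)=gr_{\mathbb F}(\Syz(S))$ literally says every syzygy of the initial forms is the initial form of a genuine syzygy, i.e. lifts to $\Syz(S)$; moreover $S$ generates $M$ because a standard basis does (if $gr_{\mathbb M}(M)=\langle f_i^*\rangle$ then $\phi$ is surjective by a standard Nakayama/completeness filtration argument on $\operatorname{coker}\phi$, whose associated graded vanishes). And $(2)\Rightarrow(1)$: given a homogeneous $h\in gr_{\mathbb M}(M)$ of degree $p$, pick a lifting $g\in M_p$; since $S$ generates $M$ we can write $g=\sum a_if_i$, giving an element $u=(a_1,\dots,a_s)$ with $\phi(u)=g$. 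If $\nu_{\mathbb F}(u)=p$ then $gr(\phi)(u^*)=g^*=h$ and we are done; if $\nu_{\mathbb F}(u)=q<p$, then $u^*$ maps to $g^*$'s image in degree $q$, which must be $0$ since $g\in M_p\subseteq M_{q+1}$, so $u^*\in\Syz(\langle f_i^*\rangle)$, and by hypothesis $u^*=z^*$ for some $z\in\Syz(S)$; replacing $u$ by $u-z$ strictly raises the valuation, and one iterates. The iteration terminates (reaching valuation $\ge p$, hence a preimage of $h$) by the Artin--Rees / Krull intersection property that guarantees $\bigcap_q F_q=0$ together with a straightforward induction on $p-\nu_{\mathbb F}(u)$ being bounded.

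The substantive implication is $(1)\Rightarrow(3)$. The inclusion $gr_{\mathbb F}(\Syz(S))\subseteq\Syz(\langle f_1^*,\dots,f_s^*\rangle)$ is formal from the canonical embedding $gr_{\mathbb M}(\ker\phi)\hookrightarrow\ker(gr(\phi))$ noted in the preliminaries (here $\ker(gr(\phi))=\Syz(\langle f_i^*\rangle)$ since $gr(\phi)(e_i^*)=f_i^*$). For the reverse inclusion, take a homogeneous syzygy $\sigma=(\bar a_1,\dots,\bar a_s)\in\Syz(\langle f_i^*\rangle)$ of degree $d$, with each $\bar a_j$ homogeneous in $P=gr_{\n}(R)$ of degree $d-\nu_{\mathbb M}(f_j)$, satisfying $\sum_j\bar a_jf_j^*=0$ in $gr_{\mathbb M}(M)$. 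Lift each $\bar a_j$ to some $a_j\in\n^{\,d-\nu_{\mathbb M}(f_j)}$ and set $g=\sum_j a_jf_j\in M$; then $u=(a_1,\dots,a_s)\in F_d$ and, because the degree-$d$ part of $\sum\bar a_jf_j^*$ vanishes, we get $\nu_{\mathbb M}(g)\ge d+1$, i.e. $g\in M_{d+1}$. Now invoke $(1)$ via the already-proved equivalence with $(2)$ — or re-run the same successive-approximation argument — to write $g$, viewed as an element of $M_{d+1}$, as $\phi(v)$ with $v\in F_{d+1}$; then $w:=u-v\in F_d$ lies in $\ker\phi=\Syz(S)$, and $w^*$ (the degree-$d$ component, which is nonzero exactly when $\nu_{\mathbb F}(u)=d$, and which equals $u^*=\sigma$ in that case, while if $\nu_{\mathbb F}(u)>d$ then $\sigma=u^*$ already lived in higher degree, contradiction unless $\sigma=0$) realizes $\sigma$ as an element of $gr_{\mathbb F}(\Syz(S))$. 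This closes the loop.

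The main obstacle — and the only place real care is needed — is the termination and well-definedness of the successive-approximation procedure used in $(2)\Rightarrow(1)$ and reused inside $(1)\Rightarrow(3)$: one must be sure that repeatedly subtracting lifted syzygies to raise the $\mathbb F$-valuation actually produces, in the limit or after finitely many steps, a bona fide preimage, which rests on the good-filtration hypothesis on $\mathbb M$ (hence Artin--Rees, giving $\bigcap_p M_p=0$ and comparability of the $\n$-adic and $\mathbb M$-adic topologies) and, if one wants an honest finite procedure rather than a Cauchy-sequence argument, on passing to the $\n$-adic completion where the filtered free module $F$ is complete. I would phrase this as a short lemma: \emph{if $\phi:F\to M$ is a filtered morphism of filtered $R$-modules with $M$ carrying a good $\n$-filtration, $\operatorname{coker}\phi=0$, and every homogeneous element of $\ker(gr(\phi))$ lifts to $\ker\phi$, then $gr(\phi)$ is surjective with kernel exactly $gr_{\mathbb F}(\ker\phi)$} — essentially the classical Robbiano--Valla lemma from \cite{RoV} — after which all three implications fall out by bookkeeping on degrees and valuations as sketched above.
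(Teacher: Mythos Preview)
The paper does not actually prove this theorem: it is stated as a result of Shibuta \cite{Sh}, following Robbiano--Valla \cite{RoV}, with no argument given. So there is nothing in the paper to compare against; your proof has to be judged on its own, and it is close to the standard one but contains a genuine circularity in the implication $(1)\Rightarrow(3)$.

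In that step you have $u\in F_d$ with $u^*=\sigma$ and $g=\phi(u)\in M_{d+1}$, and you need $v\in F_{d+1}$ with $\phi(v)=g$. You propose to ``invoke $(1)$ via the already-proved equivalence with $(2)$'' --- but in the cycle $(1)\Rightarrow(3)\Rightarrow(2)\Rightarrow(1)$ you have not yet proved $(1)\Rightarrow(2)$, so this is circular. The alternative ``re-run the same successive-approximation argument'' does not work either: the approximation in $(2)\Rightarrow(1)$ corrected a preimage of $g$ by subtracting lifts of syzygies, and the existence of such lifts \emph{is} hypothesis $(2)$. If instead you try to build $v$ from scratch using only surjectivity of $gr(\phi)$, you get an infinite Cauchy-type process, not a finite one. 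Your closing lemma does not help here, since its hypotheses already include the syzygy-lifting condition, i.e.\ it packages $(2)\Rightarrow(1)\wedge(3)$ rather than $(1)\Rightarrow(3)$.

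The clean fix is to prove directly from $(1)$ that $\phi$ is \emph{strict}, i.e.\ $\phi(F_p)=M_p$ for every $p$. From $(1)$ one gets $M_p=\phi(F_p)+M_{p+1}$ for all $p$ (lift $m^*$ through the surjective $gr(\phi)$), hence $M_p=\phi(F_p)+M_{p+k}$ for all $k$; since $\mathbb{M}$ is a good $\n$-filtration, $M_{p+k}\subseteq\n^{k-k_0}M$ for large $k$, and Krull's intersection theorem on $M/\phi(F_p)$ gives $M_p\subseteq\phi(F_p)$. With strictness in hand, your $g\in M_{d+1}=\phi(F_{d+1})$ immediately yields the desired $v$, and the rest of your argument for $(1)\Rightarrow(3)$ goes through verbatim. (Equivalently, one can pass to the $\n$-adic completion as you suggest, but then one must add the descent step: $gr_{\hat{\mathbb F}}(\widehat{\Syz(S)})=gr_{\mathbb F}(\Syz(S))$ and $\ker(gr(\hat\phi))=\ker(gr(\phi))$, so the equality over $\hat R$ gives it over $R$.) The remaining implications $(3)\Rightarrow(2)$ and $(2)\Rightarrow(1)$ are fine as you wrote them; note only that in $(3)\Rightarrow(2)$ the clause ``$S$ generates $M$'' is part of the hypothesis of $(3)$, so there is nothing to argue there.
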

 \vskip 2mm
The valuation of an element  in a filtered $R$-module
often  plays   the same  role  as  the degree of a
homogeneous element in a  $P$-module, but we want to point out that there are
important  differences. For example, in a  graded $P$-module  the number   of the
   homogeneous generators in each degree  is  independent from the chosen  minimal
  system;   the  same property does not hold in
the local case with the valuations.  For instance, in $A=k[[x,y,z]], $
it is easy to verify that  $\{xy + z^3, x^2 + xy^3, xz^3 -xy^4 +
y^2z^4\} $ and $ \{xy + z^3, x^2 + xy^3, y^2z^4\} $  are two minimal
system of generators of  the same ideal $I,  $ but they have
different valuations with respect to the classical $\m$-adic
filtration,  even if  they are liftings   of
minimal generators of $I^* =(xy,   x^2, xz^3 ,  y^2z^4,  -z^6,
y^6z^3),  $ the ideal generated by the initial forms of the elements in $I.$

 Using Theorem \ref{esiste},  one can prove   (see \cite{RSh}, Theorem 1.8.) the following result proved by Robbiano in \cite{R}.

\begin{theorem} \label{main}  Let $M$ be a filtered $R$-module and let $({\bf{G.}}, d. )$ be a  $P$-free graded   resolution of
$ gr_{\mathbb{M}}(M) :
$
$$ {\bf{G.}}: \  0 \to \oplus_{i=1}^{\beta_{l}} P(-a_{l i})  \overset{d_{l}} \to   \oplus_{i=1}^{\beta_{l-1}} P(-a_{l-1 i}) \overset{d_{l-1 }}  \to \dots  \overset{d_1} \to   \oplus_{i=1}^{\beta_{0}} P(-a_{0 i})  \overset{d_{0 }} \to gr_{\mathbb{M}}(M) \to  0.$$
Then we can build up an  $R$-free resolution  $({\bf{F.}, \delta.})$ of $M$ and a special filtration $\mathbb{F} $ on it such that $gr_{\mathbb{F}}({\bf{F.}}) = {\bf{G.}}.$
\end{theorem}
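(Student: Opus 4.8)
The plan is to construct the resolution $({\bf F.}, \delta.)$ and the special filtration inductively, lifting the graded resolution $({\bf G.}, d.)$ of $gr_{\mathbb M}(M)$ degree by degree of homological dimension, while at each step invoking Theorem \ref{esiste} to guarantee that the chosen lifts form a standard basis so that the associated graded of the lifted syzygy module is exactly the next syzygy module in ${\bf G.}$. Concretely, first I would choose $f_1,\dots,f_{\beta_0}\in M$ lifting the module generators $d_0(\bar e_{0i})$ of $gr_{\mathbb M}(M)$, with $\nu_{\mathbb M}(f_i)=a_{0i}$; set $F_0=\oplus_{i=1}^{\beta_0}R(-a_{0i})$ with its special filtration and $\delta_0:F_0\to M$, $\bar e_{0i}\mapsto f_i$. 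Because the $f_i^*$ generate $gr_{\mathbb M}(M)$, the set $\{f_i\}$ is by definition a standard basis, and hence by Theorem \ref{esiste}(3), $gr_{\mathbb F_0}(\Syz(\{f_i\}))=\Syz(\langle f_1^*,\dots,f_{\beta_0}^*\rangle)=\operatorname{Im} d_1$, the first syzygy module appearing in ${\bf G.}$.

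Next comes the inductive step, which is the heart of the argument. Suppose I have built $F_0\xleftarrow{\delta_1}F_1\xleftarrow{}\cdots\xleftarrow{}F_{j}$ with special filtrations such that $gr_{\mathbb F_i}(F_i)=G_i$ for $i\le j$, $gr(\delta_i)=d_i$, and $gr_{\mathbb F_j}(\operatorname{Im}\delta_j)=\operatorname{Im}d_{j+1}=:Z_j\subseteq G_j$ (here one uses exactness of ${\bf G.}$ to identify $\operatorname{Im}d_{j+1}=\operatorname{Ker}d_j$). The generators of $G_{j+1}$ map under $d_{j+1}$ to generators of $Z_j$; lift each of these to an element of $\operatorname{Im}\delta_j=\operatorname{Syz}$ of the previous step inside $F_j$, choosing lifts $g_1,\dots,g_{\beta_{j+1}}$ of the correct valuations $a_{j+1,i}$. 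These $g_i^*$ generate $Z_j=gr_{\mathbb F_j}(\operatorname{Im}\delta_j)$, so again by definition $\{g_i\}$ is a standard basis of $\operatorname{Im}\delta_j$, and Theorem \ref{esiste} gives both that they generate and that $gr_{\mathbb F_{j+1}}(\operatorname{Syz}(\{g_i\}))=\operatorname{Ker}d_{j+1}$. Define $F_{j+1}=\oplus R(-a_{j+1,i})$, $\delta_{j+1}(\bar e_{j+1,i})=g_i$; the image is $\operatorname{Im}\delta_j$ — wait, one must be careful: $\delta_{j+1}$ should land in $\operatorname{Ker}\delta_j$, so the $g_i$ must be chosen as syzygies, i.e. in $\operatorname{Ker}\delta_j=\operatorname{Syz}$ of the $j$-th step; the inductive hypothesis $gr_{\mathbb F_j}(\operatorname{Ker}\delta_j)=\operatorname{Ker}d_j=\operatorname{Im}d_{j+1}$ is what makes the valuation bookkeeping match the shifts $a_{j+1,i}$.

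The termination after $\ell$ steps is immediate since ${\bf G.}$ has length $\ell$: at stage $\ell$ one finds $gr_{\mathbb F_\ell}(\operatorname{Ker}\delta_\ell)=\operatorname{Ker}d_\ell=0$, and since $gr_{\mathbb M}(-)$ is faithful on nonzero filtered modules (a module with zero associated graded is zero, as the filtration is separated on finitely generated modules over a Noetherian local ring), $\operatorname{Ker}\delta_\ell=0$, so ${\bf F.}$ is a resolution. Exactness in intermediate spots follows the same way: $\operatorname{Ker}\delta_j/\operatorname{Im}\delta_{j+1}$ has zero associated graded because $gr$ of numerator and denominator agree by construction, hence it vanishes. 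I expect the main obstacle to be purely organizational rather than conceptual: keeping straight the three modules in play at each stage — $F_j$, the syzygy submodule $\operatorname{Ker}\delta_j\subseteq F_j$, and its associated graded sitting inside $G_j$ — and verifying that the special filtration shifts $a_{j+1,i}$ prescribed by ${\bf G.}$ are simultaneously (a) the valuations of the chosen lifts $g_i$ and (b) compatible with making $\delta_{j+1}$ a morphism of filtered modules. The one genuine input beyond bookkeeping is Theorem \ref{esiste}, which converts "the initial forms generate" into "syzygies lift and the associated graded of syzygies is the syzygy of initial forms" — this is exactly the engine that propagates the inductive hypothesis.
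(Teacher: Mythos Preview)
Your proposal is correct and follows essentially the same inductive construction as the paper: lift generators of $gr_{\mathbb M}(M)$ to a standard basis of $M$, use Theorem~\ref{esiste}(3) to identify $gr_{\mathbb F_0}(\operatorname{Ker}\delta_0)=\operatorname{Ker}d_0$, and iterate on the filtered syzygy modules. One minor point: for intermediate exactness the paper argues more directly than you do --- since the lifted $g_i$ form a standard basis of $\operatorname{Ker}\delta_{j-1}$, Theorem~\ref{esiste}(2) already says they \emph{generate} $\operatorname{Ker}\delta_{j-1}$, so $\operatorname{Im}\delta_j=\operatorname{Ker}\delta_{j-1}$ immediately, without passing through the quotient-has-zero-associated-graded argument.
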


We recall that $({\bf{F.}, \delta.})$ is defined by an inductive
process. Let us present  the inductive steps   because the
construction  will be useful in the following  (for more detail we
refer to \cite{RSh}).  Starting from  $({\bf{G.}},d. ), $ denote by
$\{\epsilon_{0i}\}$ a basis of $G_0.$ We put $g_i=d_0(\epsilon_{0i})
\in gr_{\mathbb{M}}(M)   $ and let $f_i \in M $ be such that
$gr_{\mathbb{M}}(f_i)=g_i. $ Then   $ a_{0i} =
\nu_{\mathbb{M}}(f_i). $ We define   the $R$-free module $F_0$ of
rank $\beta_0 $ with the induced special filtration ${\mathbb{F}_0}
$  on $R^{\beta_0}$  $$F_0= \oplus_{i=1}^{\beta_0} R(-a_{0 i})  $$
Denote by $\{e_{0i}\}$ a basis of $F_0$ and define  $ \delta_0: F_0
\to M $   such that $\delta_0(e_{0i})=f_i. $ Since $d_0 $ is
surjective,  the $f_i$'s generate a standard basis of $M,   $  $
Ker(d_0) = gr_{\mathbb{F}_0}(Ker (\delta_0))  $ and $ F_0
\overset{\delta_0}\to M \to 0 $ is exact.   We can repeat the same
procedure on the successive $j$-steps ($j >0$) of the resolution of
$gr_{\mathbb{M}}(M).    $  We lift  a system of generators of $Ker
(d_{j-1}) = gr_{\mathbb{F}_{j-1}}(Ker ( \delta_{j-1}) ) $ to
elements in $Ker (\delta_{j-1}) $ of valuation $a_{j i}. $ Hence we
build up the $R-$free modules  $F_{j}$  with special filtrations
$F_{j}=\oplus_{i=1}^{\beta _{j }}R(-a_{j i}) $ and the differential
maps $\delta_j : F_j \to F_{j-1} $ such that
 $gr_{\mathbb{F}_j}(F_j)  = G_j , $ $gr_{\mathbb{F}_j}(\delta_j)  = d_j . $ Since, by construction, the lifted elements    form a standard basis of  $Ker (\delta_{j-1}),  $ we have
 \begin{equation} \label{step} Ker (d_j)=gr_{\mathbb{F}_j}(Ker ( \delta_j) ) \end{equation} and we go on.

\vskip 2mm

It is worth   saying  that  the $R$-free resolution of $M$
$$   {\bf{F.}}: \  0 \to R^{\beta_{l}}   \overset{\delta_{l}} \to  R^{\beta_{l-1}}   \overset{\delta_{l-1 }}  \to \dots  \overset{\delta_1} \to  R^{\beta_{0}}   \overset{\delta_{0 }} \to M \to  0,  $$ coming   from a minimal free resolution of $ gr_{\mathbb{M}}(M)$,    is not necessarily minimal.  In particular $({\bf{F.}, \delta.})$ is minimal if and only if  the Betti numbers of $M$ and $gr_{\mathbb{M}}(M) $ coincide. Our aim is to get information from the numerical invariants of ${\bf{G.}} $ in order to achieve  a minimal $R$-free resolution of $M.$

\vskip 3mm

Let $N$ be a finitely generated graded module over the polynomial
ring $P$. We consider   $$  G_j =  \oplus_{i=1}^{\beta_j} P(-a_{ji})
\overset{d_{j }}  \to G_{j-1} =  \oplus_{i=1}^{\beta_{j-1}}
P(-a_{j-1, i}),$$ a part of      a  {\it minimal } free resolution
$({\bf{G.}}, d.)$  of $N $  with $a_{j 1} \le \dots \le a_{j
\beta_j}  \ $ and  $a_{j-1, 1} \le \dots \le a_{j-1,  \beta_{j-1}}$.
Let   $ 1 \le s \le \beta_j $ and $ 1\le r \le \beta_{j-1} $ and set
$$ u_{r s}: = a_{j s} - a_{j-1, r}, $$ then the matrix $U_j= (u_{r
s}) $ is  called the {\it{j-th degree-matrix }} of $N $. We say that
$U_j $ is {\it non-negative}  if all the entries of $U_j$ are
non-negative.  \noindent We remark that the matrices $U_j$ are
univocally determined by  the graded $P$-module $N. $  Denote by
$\pd (N) $  the projective  dimension of   $N $ as a $P$-module.
\vskip 2mm
\begin{proposition}\label{degree} With the above notations,
let $({\bf{F.}, \delta.})$ be a free resolution of $M$ coming  from
a  graded minimal free resolution $({\bf{G.}}, d.)$  of
$gr_{\mathbb{M}}(M).  $  If   the degree-matrices $U_j$ of
$gr_{\mathbb{M}}(M) $ are non-negative  for every $\  j \le \pd
(gr_{\mathbb{M}}(M)),  $   then
 $({\bf{F.}, \delta.})$ is  minimal.   \end{proposition}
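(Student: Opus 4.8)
The plan is to compare, entry by entry, the matrices of the differentials $\delta_j$ of $({\bf F.},\delta.)$ with those of the $d_j$ of $({\bf G.},d.)$, and to show that the hypothesis ``$U_j$ non-negative'' \emph{together with} the minimality of $({\bf G.},d.)$ forces every entry of every $\delta_j$ into $\n$. Recall that a free resolution over the local ring $R$ is minimal precisely when all the entries of the matrices of its differentials lie in $\n$, so this is exactly what has to be proved.

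First I would fix, for each $j$, the bases $\{e_{ji}\}$ of $F_j$ and $\{\epsilon_{ji}\}$ of $G_j$ produced by the inductive construction recalled after Theorem~\ref{main}, so that $\gr_{\mathbb F_j}(e_{ji})=\epsilon_{ji}$ and the filtration on $F_j$ is $(F_j)_p=\bigoplus_i \n^{\,p-a_{ji}}\,e_{ji}$ (with the convention $\n^{m}:=R$ for $m\le 0$). Write $\delta_j(e_{js})=\sum_r c_{rs}\,e_{j-1,r}$ with $c_{rs}\in R$, and $d_j(\epsilon_{js})=\sum_r p_{rs}\,\epsilon_{j-1,r}$ with $p_{rs}$ homogeneous of degree $u_{rs}=a_{js}-a_{j-1,r}$. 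Since $\delta_j$ is a morphism of filtered modules and $e_{js}\in (F_j)_{a_{js}}$, we get $\delta_j(e_{js})\in (F_{j-1})_{a_{js}}=\bigoplus_r \n^{\,u_{rs}}\,e_{j-1,r}$, hence $c_{rs}\in\n^{\,u_{rs}}$ for all $r,s$. Reading off the identity $\gr_{\mathbb F_j}(\delta_j)=d_j$ on the degree-$a_{js}$ component — where $\epsilon_{js}$ is sent to the class of $\delta_j(e_{js})$ in $(F_{j-1})_{a_{js}}/(F_{j-1})_{a_{js}+1}\cong \bigoplus_r \bigl(\n^{\,u_{rs}}/\n^{\,u_{rs}+1}\bigr)\,e_{j-1,r}$ — one obtains that $p_{rs}$ is precisely the image of $c_{rs}$ under $\n^{\,u_{rs}}\twoheadrightarrow \n^{\,u_{rs}}/\n^{\,u_{rs}+1}$.

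The decisive step is then elementary. Fix $j\le\pd(\gr_{\mathbb M}(M))$ and an entry $(r,s)$; by hypothesis $u_{rs}\ge 0$. If $u_{rs}\ge 1$ then $c_{rs}\in\n^{\,u_{rs}}\subseteq\n$ already. If $u_{rs}=0$, then $p_{rs}$ is homogeneous of degree $0$, i.e.\ a scalar; but minimality of $({\bf G.},d.)$ gives $d_j(G_j)\subseteq (x_1,\dots,x_n)\,G_{j-1}$, so $p_{rs}\in(x_1,\dots,x_n)$ and therefore $p_{rs}=0$; by the previous paragraph this says the image of $c_{rs}$ in $R/\n$ is $0$, i.e.\ $c_{rs}\in\n$. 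In all cases $c_{rs}\in\n$. Since $({\bf F.},\delta.)$ has length $\pd(\gr_{\mathbb M}(M))$ by construction, this shows that every differential of $({\bf F.},\delta.)$ has all its entries in $\n$, hence $({\bf F.},\delta.)$ is minimal.

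The only point that needs care is the entrywise translation of $\gr_{\mathbb F_j}(\delta_j)=d_j$ in the second paragraph (equivalently, reading the relation $\gr_{\mathbb F}({\bf F.})={\bf G.}$ off coefficient by coefficient); everything else is formal. It is worth emphasizing where the content lies: non-negativity of $U_j$ on its own is \emph{not} enough, since it would permit a unit (degree-$0$) entry in some $\delta_j$ and hence a non-minimal resolution; it is genuinely the vanishing of the degree-$0$ entries of the \emph{minimal} graded resolution $({\bf G.},d.)$ that rules this out, upgrading $c_{rs}\in R$ to $c_{rs}\in\n$ exactly in the critical case $u_{rs}=0$.
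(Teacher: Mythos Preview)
Your proof is correct and follows essentially the same approach as the paper: read off the entries of $\delta_j$ against those of $d_j$ via $\gr_{\mathbb F}({\bf F.})={\bf G.}$, use the filtration to get $c_{rs}\in\n^{\,u_{rs}}$, and conclude $c_{rs}\in\n$ from $u_{rs}\ge 0$ together with the minimality of $({\bf G.},d.)$. If anything, you are a bit more explicit than the paper about the role of minimality in the case $u_{rs}=0$; the paper folds this into the single line ``$\deg(n_{rs})=u_{rs}$ if $u_{rs}>0$, $n_{rs}=0$ otherwise,'' and then proceeds exactly as you do.
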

\begin{proof}
Let  $1\leq j\leq \pd(gr_{\mathbb{M}}(M)),  $    $1 \le s \le
\beta_j $ and $ 1\le r \le \beta_{j-1} $ and denote  by
${\mathcal{M}}_j^*=(n_{r s})  $  the matrix associated to $d_j$ with
respect to  the bases $\{\epsilon_{j 1}, \dots, \epsilon_{j \beta_j} \} \ $ of
$G_j$ and  $\{\epsilon_{j-1, 1}, \dots, \epsilon_{j-1,  \beta_{j-1}}\}   \ $ of $
G_{j-1}.$  Let $U_j= (u_{r s}) $ be the corresponding degree-matrix.
Notice that  $deg (n_{r s})= u_{r s} $ if $u_{r s} > 0, $ $n_{r s}=0
$ otherwise.

Following Theorem \ref{main}, we   build up a  free resolution
$({\bf{F.}, \delta.})$ of $M$ and denote by $
{{\mathcal{M}}_j}=({m_{r s}}) $ the corresponding matrix associated
to $\delta_j.  $   By construction (see (\ref{step})) the columns of
${\mathcal{M}}_j$ form a  standard bases of $\Syz_j(M)$ and the
columns of ${\mathcal{M}}_j^*  $ are the initial forms of the
corresponding columns of ${{\mathcal{M}}_j} $ with respect to the
filtration which has been defined on $F_{j-1}.  $   In particular,
the degree matrix $U_j$ controls the valuations of entries of  $
{{\mathcal{M}}_j}$ with respect to the $\n$-adic filtration. If
$n_{r s} \neq  0, $ then $\nu_{\n}({m_{r s}}) = u_{r s}, $ otherwise
$\nu_{\n}({m_{r s}}) >  u_{r s}.  $ So if for all $j$ the entries of
the matrix $U_j$ are non-negative, then all the entries of
${\mathcal{M}}_j$ belong to $\n$ and hence $({\bf{F.}, \delta.})$ is
minimal.
\end{proof}

 The converse of Proposition \ref{degree} is not true.   It is possible that the degree-matrices of $gr_{\mathbb{M}}(M)   $    have  negative entries, nevertheless the built up resolution $({\bf{F.}, \delta.})$ is minimal. This is the case  if we consider  the local ring  $A= k[[t^9, t^{17}, t^{19}, t^{39}]] $ equipped with the $\m$-adic filtration where $\m$ is the maximal ideal of $A$   (see \cite{RSh} Example 3.5.).\vspace{.2cm}

\bigskip

\section{Consecutive cancellations in   Betti numbers of $gr_{ \mathbb{M}}(M) $}

The aim of this section is to describe the possible minimal $R$-free
resolutions  of $M$ obtained  from a minimal $P$-free resolution of
$gr_{\mathbb{M}}(M). $
We present the following definition which is a suitable adaptation of Peeva's definition. \vskip 2mm Given a sequence of
numbers $\{ c_i\} $ such that  $c_i=\sum_{j\in {\bf{N}}} c_{ij}, $
we obtain a new sequence by a {\it consecutive cancellation}  as
follows: fix an index $i, $ and choose $j $ and $j'$  such that $ j
\le  j' $ and $ c_{ij}, c_{i-1, j'} >0;  $ then  replace $ c_{ij}  $
by $ c_{ij}-1   $ and $ c_{i-1,j'}   $ by  $ c_{i-1,j'}-1,    $ and
accordingly, replace in the sequence  $c_i $ by $c_i -1$ and
$c_{i-1}$ by  $ c_{i-1} -1. $
  If $ j <j' $  we call it an {\it i negative consecutive
cancellation}  and if $ j = j'$  an  {\it i zero consecutive cancellation}.
 \vskip 2mm
A {\it sequence of consecutive cancellations } will mean  a finite number of  consecutive cancellations performed on  a given sequence.
\vskip 3mm

 Let $N $ be a  homogeneous  $P$-module   with
  $P$- free graded   resolution given by:
 $$ {\bf{G.}}: \  0 \to \oplus_{i \ge 0 }  P^{\beta_{l j}} (-j)  \overset{d_{l}} \to  \oplus_{i \ge 0 }  P^{\beta_{l-1, j}} (-j)  \overset{d_{l-1 }}  \to \dots  \overset{d_1} \to   \oplus_{i \ge 0 }  P^{\beta_{0 j}} (-j).    $$   According to the above definition,  we will say that   the sequence of the Betti numbers $\{\beta_i = \sum_{j\in {\bf{N}}} \beta_{ij}\}     $ of $ N $
 admits an  {\it  $i $ negative consecutive cancellation }  (resp.   {\it $i $ zero  consecutive cancellation })  if there exist integers   $  j < j'  $ (resp. $j=j' $) such that  $\beta_{i j}, \beta_{i-1,j'}>0.$
 \vskip 2mm
 For example   $ \dots \to P (-3) \oplus P(-5)\oplus P(-6)  \to P^2(-2) \oplus P^2(-5)  \to \dots $ admits a zero cancellation ($P(-5), P(-5)$) and a negative cancellation $(P(-3), P(-5))$.
 \vskip 2mm
 Notice that  an  $i$   negative consecutive cancellation  corresponds  to a  negative entry of the $i$-th degree-matrix   of the graded resolution of $N.$  It is clear that   a sequence of Betti numbers can admit    different diagrams of  zero or negative successive cancellations.
 
  \vskip 2mm
  Combining Theorem \ref{main} and Proposition  \ref{degree}, we  prove  the following result.

\begin{theorem}\label{asli}
Let $(R,\n) $ be a regular local ring and let $M$ be   a filtered
$R$-module. Then  the Betti numbers of $M$ as an $R$-module can be
obtained from the  Betti numbers of $gr_{ \mathbb{M}}(M) $ as a
$gr_{\n}(R)$-module  by a sequence of {\bf negative}  consecutive
cancellations. \end{theorem}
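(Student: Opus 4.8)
The plan is to start from a minimal graded $P$-free resolution $({\bf{G.}},d.)$ of $N = gr_{\mathbb{M}}(M)$ and run the lifting construction of Theorem \ref{main} to obtain an $R$-free resolution $({\bf{F.}},\delta.)$ of $M$ with $gr_{\mathbb{F}}({\bf{F.}}) = {\bf{G.}}$. This resolution has the same ranks $\beta_i$ as ${\bf{G.}}$ but need not be minimal; the idea is to minimize it step by step and to track each cancellation of a free summand in terms of the graded data on ${\bf{G.}}$. The key observation is the one already isolated in the proof of Proposition \ref{degree}: the matrix ${\mathcal{M}}_j$ representing $\delta_j$ has the property that, for each entry $m_{rs}$, if the corresponding entry $n_{rs}$ of the graded matrix ${\mathcal{M}}_j^*$ is nonzero then $\nu_{\n}(m_{rs}) = u_{rs}$, and otherwise $\nu_{\n}(m_{rs}) > u_{rs}$, where $u_{rs} = a_{js} - a_{j-1,r}$ is the corresponding degree-matrix entry. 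In particular a \emph{unit} entry $m_{rs}$ of $\delta_j$ can occur only at a position $(r,s)$ where $u_{rs} \le 0$; moreover if $u_{rs} = 0$ then $n_{rs}$ would be a scalar, contradicting minimality of ${\bf{G.}}$, so a unit entry forces $u_{rs} < 0$, i.e.\ $a_{js} < a_{j-1,r}$.

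Next I would perform the standard minimization move: whenever $\delta_j$ has a unit entry $m_{rs}$, one changes bases in $F_j$ and $F_{j-1}$ so as to split off the rank-one summands $R e_{js}$ and $R e_{j-1,r}$, producing a new (still $R$-free) resolution $({\bf{F'.}},\delta'.)$ of $M$ with $\beta_j$ and $\beta_{j-1}$ each dropped by one, and all other ranks unchanged. The point to verify is that this single move is recorded on the numerical level exactly as an \emph{$j$ negative consecutive cancellation} of the sequence $\{\beta_i = \sum_j \beta_{ij}\}$: we cancel one copy of $P(-a_{js})$ in homological degree $j$ against one copy of $P(-a_{j-1,r})$ in homological degree $j-1$, and by the previous paragraph these shifts satisfy $a_{js} < a_{j-1,r}$, which is precisely the inequality $j < j'$ in the definition of a negative consecutive cancellation. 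Iterating, after finitely many such moves one reaches a resolution of $M$ with no unit entries in any $\delta_j$, i.e.\ a minimal $R$-free resolution, whose ranks are the Betti numbers of $M$; and the whole process exhibits $\{\beta_i(M)\}$ as obtained from $\{\beta_i(gr_{\mathbb{M}}(M))\}$ by a sequence of negative consecutive cancellations, as claimed.

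The main obstacle, and the point requiring the most care, is the bookkeeping across successive minimization moves: after splitting off a summand from $F_j$ and $F_{j-1}$, the entries of the \emph{adjacent} differentials $\delta_{j+1}$ and $\delta_{j-1}$ are altered (one subtracts suitable multiples of the removed row/column), and one must check that the associated shifts are not destroyed — that the remaining summands still carry well-defined shifts $a_{\cdot i}$ compatible with the special filtration, so that the inequality "unit entry $\Rightarrow$ strictly increasing shift across homological degrees" continues to hold after the move. Here the cleanest route is to keep the special-filtration viewpoint throughout: a minimization move on $R$-free filtered complexes corresponds, on the associated graded side, to discarding a pair of free summands, and since on the graded side we are just splitting a (possibly non-minimal-looking) graded complex the shifts behave as in the graded theory. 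I would also note explicitly why \emph{only} negative (never zero) consecutive cancellations appear: a zero consecutive cancellation would correspond to a unit among the coefficients of $d_j$ in equal degrees, contradicting the assumed minimality of ${\bf{G.}}$. This is the essential difference from Peeva's graded statement, where zero consecutive cancellations are exactly what occur, and it is worth highlighting since it motivates the "enlarged list of allowed cancellations" promised in the introduction.
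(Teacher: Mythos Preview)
Your proposal is correct and follows essentially the same route as the paper: lift a minimal graded resolution of $gr_{\mathbb{M}}(M)$ via Theorem \ref{main}, observe (via the analysis in Proposition \ref{degree}) that any unit entry of $\delta_j$ sits at a position with strictly negative degree-matrix entry, split off the corresponding trivial subcomplex, and iterate. Your explicit remark that $u_{rs}=0$ is impossible at a unit entry because minimality of ${\bf G.}$ forces $n_{rs}=0$ there (hence $\nu_{\n}(m_{rs})>0$) is exactly the reason only \emph{negative} cancellations appear; the paper leaves this implicit in its appeal to Proposition \ref{degree}.

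One small correction eases the bookkeeping you flag as the ``main obstacle'': with the base change the paper uses (only in $F_{i-1}$, replacing $e_{i-1,p}$ by $\delta_i(e_{iq})$), the adjacent matrices $\widetilde{\mathcal{M}}_{i-1}$ and $\widetilde{\mathcal{M}}_{i+1}$ are obtained from $\mathcal{M}_{i-1}$ and $\mathcal{M}_{i+1}$ simply by deleting a column, respectively a row --- their surviving entries are \emph{unchanged}, not modified by subtractions as you write. Only $\widetilde{\mathcal{M}}_i$ has entries of the form $m_{rs}-m_{pq}^{-1}m_{ps}m_{rq}$. So the persistence of the ``unit $\Rightarrow$ negative degree'' invariant is automatic for $j\ne i$, and only the positions $(r,s)$ with $r\ne p$, $s\ne q$ in level $i$ require the valuation check; the paper records this as ``easy to see'' and it is what your special-filtration viewpoint is meant to supply.
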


\begin{proof}
  Let $({\bf{G.}}, d.)$ be the  minimal free resolution  of $gr_{ \mathbb{M}}(M)$ and  $\{\beta_i = \sum_{j\in {\bf{N}}} \beta_{ij}\}     $  the corresponding sequence of the Betti numbers.  By Theorem \ref{main},  we build up a
free resolution $({\bf{F.}, \delta.})$ of  $M$ as an $R$-module
from $({\bf{G.}}, d.)$.     If $({\bf{F.}, \delta.})$ is not
minimal, then, for some integer $i, $    the matrix   of the $i$-th
differential map $\delta_i$
 $$F_i=R^{\beta_i} \overset{\delta_{i }}  \to F_{i-1}=R^{\beta_{i-1}}$$
  has an invertible entry.    Fix  the bases
  $\{e_{i 1}, \dots, e_{i \beta_i} \} \ $ of $F_i$ and  $\{e_{i-1, 1}, \dots, e_{i-1,  \beta_{i-1}}\}   \ $ of
   $ F_{i-1}$ and let ${{\mathcal{M}}_i}=({m_{r s}})$ be the corresponding matrix.

By Proposition
\ref{degree},
  the
$i$-th degree-matrix $U_i=(u_{rs})  $   of $gr_{
\mathbb{M}}(M) $  has a negative entry. This means that the sequence $\{\beta_i = \sum_{j\in {\bf{N}}} \beta_{ij}\}     $
  admits an  $i $ negative consecutive cancellation.
 Let $ (p,q)$ be the least couple such that $m_{pq}$ is invertible
in $ {{\mathcal{M}}_i}. $
 Now, to   $({\bf{F.}, \delta.})$, we  apply   a standard procedure presented for example  in \cite{E2}.  We replace the basis of $F_{i-1}$ by $e'_{i-1, j}= e_{i-1, j}$ if  $1\leq j\leq \beta_{i-1}, $   $j\neq
 p$, and
  $e'_{i-1, p}=\sum _{j=1}^{\beta_{i-1}}m_{jq}e_{i-1,j}. $

 It is clear that,  with respect to the new basis,  the matrices of differential maps in the resolution of $M$  change just for $\delta_i $ and  $\delta_{i-1}$.
 Precisely, one can easily check that in ${{\mathcal{M}}_i}$ for each $r\neq p$, $m_{rs}$ is replaced  by $m_{rs}-m_{pq}^{-1}m_{ps}m_{rq}$ and the entries of the the $p-$th
 row, $m_{ps}$, are replaced  by $m_{pq}^{-1}m_{ps}$. So \\
1) the $q-$th column of ${{\mathcal{M}}_i}$ is replaced by   $(\overbrace{0,...,0}^{p-1},1,0,...,0)^{tr}$.\\
 2) the $p-$th column of  ${{\mathcal{M}}_{i-1}}$ is replaced  by
$(0\ ... \ 0)^{tr}.$\vspace{.2cm}

Therefore, $\delta_j(e_{iq})=e'_{i-1,p}$ and
$\delta_{i-1}(e'_{i-1,p})=0$. Let $H_j=0$ if $j\neq i-1,i$ and
$H_i=F_i|_{e_{iq}}$ and $H_{i-1}=F_{i-1}|_{e'_{i-1,p}}$. Thus we
have found the following trivial subcomplex of $({\bf{F.},
\delta.})$
$${\bf {H.}}:\hspace{.1cm}\underbrace{0\rightarrow \cdots \rightarrow
0}_{l-i+1}\to R\overset{id} \to R \rightarrow\underbrace {0
\rightarrow\cdots \rightarrow 0}_{i},$$ where $l$ is the length of
$({\bf{F.}, \delta.}). $  ${\bf {H.}} $ is embedded in ${\bf {F.}}$
in such a way that $\widetilde{{\bf {F.}}}= {\bf {F.}}/{\bf {H.}}$
is again a free resolution of $M$ which corresponds to cancelling a
copy of $R$ in $F_i $ and $F_{i-1}.$  The matrices of differential
maps of  ${\bf {F.}}/{\bf {H.}}$ are  different from those of
$({\bf{F.}, \delta.})$ just for $i-1, i, i+1$. Denote the matrices
of the new resolution with respect to the new bases by
$\widetilde{{\mathcal{M}}}_j. $   Then
$\widetilde{{\mathcal{M}}}_{i-1}$ is obtained
 by deleting  the $p-$th column of ${{\mathcal{M}}}_{i-1}$,
$\widetilde{{\mathcal{M}}}_{i+1}$ is obtained
 by deleting  the $q-$th row of ${{\mathcal{M}}}_{i+1}$, and,  finally, by  deleting the
$q-$th column and $p-$th row of ${{\mathcal{M}}}_{i}$ we obtain
$\widetilde{{\mathcal{M}}}_{i}$.  It is easy to see that the
eventually remaining   invertible entries of the matrices of the
differential maps of the new resolution,
$\widetilde{{\mathcal{M}}}_j$, still correspond to the negative
entries of the degree matrices of $gr_{ \mathbb{M}}(M)  $ out of the
$p-$th row and $q-$th column of ${\mathcal{M}}_i^*.   $  We can
repeat the procedure on $\widetilde{{\bf {F.}}} $ until  getting  a
minimal free resolution of $M. $

\end{proof}

In general, the Betti numbers of $gr_{\m}(A) $ can be much greater
than those of $A. $ For example, if we consider $A=R/I $ with $I=
(x^3-y^7, x^2y-xt^3-z^6) \subseteq R=k[x,y,z,t], $ then
$\beta_1(A)=\mu(I)=2 $ and $\beta_1(gr_{\m}(A))=\mu(I^*)=8.$
However, Theorem \ref{asli} gives a constructive method which
relates the minimal free resolution of $A$ and   the minimal  graded
free resolution of $gr_{\m}(A).$

We present the following example in order to help the reader to visualize better the
procedure of Theorem \ref{asli}.
\begin{example} {\rm   Consider     $A=k[[t^{19}, t^{26}, t^{34}, t^{40}]] \simeq R /I $   where  $R= k[[x, y, z, w]]. $ Using Theorem \ref{asli},  we want to deduce the Betti numbers of $A$ as an $R$-module from the   minimal graded free resolution of $gr_{\m}(A) = P/I^* $ as a $P= k[x, y, z, w]$-module.

 In this case   we have $\mu(I)=\mu(I^*) =5.  $    Both  $A$ and $ gr_{\m}(A)=P/I^*  $ are Cohen-Macaulay, hence they have the same homological dimension. Nevertheless they have different Betti numbers:
$$0 \to R \to R^5 \to R^5  \to I   \to 0$$
  and
  $$0 \to  {   P^2 }  \to  {  P^{6}}  \to  P^5  \to  I^* \to 0.$$
We consider the graded minimal free resolution of $I^*:$ $$\  0\to {
P({  -5})} \oplus P(-8)  \to P^3({  -4}) \oplus P^2(-5) \oplus { P({
-6})} \to P^5(-3)  \to I^* .$$ It presents a unique negative
cancellation: $(P({  -5}), P(-6) ). $  Notice that the Betti numbers
of $A$ are obtained from  the total Betti numbers of $\gr_{\m}(A),
$ after performing the above cancellation.  }
\end{example}

 \vskip 3mm

 It  should be noted that there are many examples where the existence of possible consecutive cancellations does not imply the existence of an ideal for which those cancellations are realized.

\begin{example} {\rm  Let $ I $ be an ideal in the regular local ring $(R, \n)$ such that $A=R/I $ is Artinian with Hilbert function $ \{(1, 5, 1, 1, 1)\} $  and $R/\n$ has characteristic $0.$  Elias and Valla (see \cite{EV}) have  proved that the number of the isomorphism classes of the Artinian local rings with this  Hilbert function is $5. $ They  have different Betti numbers because they correspond to the different values  of the Cohen-Macaulay type $1 \le \tau \le 5.$
Up to isomorphism, all of them have the same  associated graded ring
$gr_{\m}(A) =P/I^* $ where
$$ I^* = (x_1^5, x_1 x_2, x_1x_3, x_1x_4, x_1x_5, x_2^2, x_2x_3, x_2x_4, x_2x_5,
x_3^2, x_3x_4, x_3x_5, x_4^2, x_4x_5, x_5^2) $$ in $P=k[x_1, \dots, x_5].$
Hence the minimal free resolution of $gr_{\m}(A) $  is
\begin{eqnarray*}
{\bf{G.}}: 0 \to P^4(-6)\oplus P(-9) \to  P^{20}(-5)\oplus P^4(-8)
\to P^{39}(-4)\oplus P^6(-7)\\  \to P^{36}(-3)\oplus P^4(-6)\to
P^{14}(-2)\oplus  P(-5) \to P.
\end{eqnarray*}
   By Theorem \ref{asli} and by Elias and Valla's result, we know  that only $ 5$ diagrams of negative consecutive cancellations can be   realized, but  the resolution of $gr_{\m}(A) $ admits a larger number of sequences  of negative consecutive cancellations.   }

\end{example}

\vskip 3mm

Next example shows that we may take advantage of the generality of Theorem \ref{asli} by  using  a more   advantageous  filtration.

\begin{example} {\rm  Consider  the ideal $I = (x^2y^5, xyz^6-z^9, y^5z^6) $  in   $  R = K[[x, y, z]].$   We can prove that     $ gr_{\m}(A)= P/I^*  $ where
$ I^*= (x^2y^5,  xyz^6,  y^5z^6,  y^4z^9,  y^3z^{12},  y^2z^{15},
yz^{18},  z^{21}) $   is the ideal of $P = K[x,  y,  z] $ generated
by the initial forms of the elements of $I.$ The  minimal graded
free resolution  of  $I^*$ as a $P$-module is
 \begin{eqnarray*}
{\bf{G.}}: 0 \to P(-15)\oplus P(-17)\oplus P(-19)\oplus P(-21) \to
P(-12)\oplus P(-13)\oplus P^2(-14)  \\ \oplus P^2(-16)\oplus
P^2(-18)\oplus P^2(-20)\oplus P(-22)   \to P(-7)\oplus P(-8)\oplus
P(-11)\oplus P(-13)\\ \oplus  P(-15)\oplus P(-17)\oplus P(-19)\oplus
P(-21)
 \end{eqnarray*}
 which admits several negative cancellations.
\vskip 3mm \noindent By   Theorem \ref{asli},   we can also compute
a resolution of $I$ as an $R$-module, by considering the $\n$-adic
filtration on $I$ itself. Then we are interested  in a resolution of
the graded $P$-module $gr_{\n} (I)= \oplus_n  I \n^n/I \n^{n+1}$
where $ \n=(x,y,z). $ In \cite{HRV}, page 595,   it is   proved that
$$ gr_{\n} (I) = P\oplus P/(x^2y^5) \oplus P/(x^2, xy,xz^3,z^6). $$
  Hence the minimal free resolution of $ gr_{\n} (I)  $ is
\begin{eqnarray*}
{\bf{G.}}: 0 \to P(-6) \overset{d_3}\to P(-3)\oplus P^2(-5)\oplus
P(-7) \overset{d_2} \to P^2(-2)\oplus P(-4)\oplus P(-6)\oplus P(-7)
\overset{d_1} \to P^3
\end{eqnarray*}
which is easier to handle than those of  $gr_{\m}(A)= P/I^*.   $

 After  the negative  consecutive  cancellations  on $(F_3, F_2,
F_1) $ corresponding  to
    $(P(-6),P(-7),0),$ $  (0, P(-5),P(-7)), $ $ (0, P(-5),P(-6) ), $   $ (0, P(-3),P(-4)) $ we get the minimal free resolution of $I $ as   $$ 0 \to R^2 \to R^3 \to I \to 0.$$}

 \end{example}
\medskip

\section{Consecutive cancellations in   Betti numbers of $\Lex(I)  $}

\vskip 3mm Let $I$ be an ideal of the  regular local ring $(R,\n) $
and consider the local ring $ A=R/I $ with maximal ideal $\m=\n/I. $
The Hilbert function of $A$ is the Hilbert function of $
gr_{\m}(A)=P/I^*  $ where $I^* $ is the ideal of the polynomial ring
$P$ generated by the initial forms of the elements of $I.$ We denote
by $L=\Lex(I) $ the (unique)  lexicographic ideal of $P$ such that
$P/L $ has  the same Hilbert function as $ gr_{\m}(A).$ While the
resolution of $ gr_{\m}(A) $ is in general unknown,
  the resolution of  $P/\Lex(I) $ can be easily determined.

 Combining Peeva's result  and Theorem \ref{asli}, we immediately get the
following theorem.

\begin{theorem}\label{asli2} Let $I$ be an ideal of the  regular local ring $(R,\n). $  The Betti numbers of $R/I$ can be obtained from the Betti numbers of $P/\Lex(I) $  by a sequence of  negative and  zero consecutive cancellations.
\end{theorem}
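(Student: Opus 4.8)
The plan is to chain together the two results that have already been established. First, recall that by Peeva's theorem (cited as \cite[Theorem 1.1]{Pe}), the graded Betti numbers of $P/I^*$ can be obtained from the graded Betti numbers of $P/\Lex(I)$ by a sequence of \emph{zero} consecutive cancellations; here we use that $\Lex(I^*) = \Lex(I)$ since $I$ and $I^*$ (equivalently $A$ and $gr_{\m}(A)$) have the same Hilbert function, and that $\Lex(I)$ is the unique lexicographic ideal of $P$ with this Hilbert function. Second, apply Theorem \ref{asli} to the filtered $R$-module $M = R/I$ equipped with the $\n$-adic filtration: its associated graded module is $gr_{\m}(A) = P/I^*$, and the theorem says the Betti numbers of $R/I$ (as an $R$-module) are obtained from the Betti numbers of $gr_{\m}(A)$ (as a $P$-module) by a sequence of \emph{negative} consecutive cancellations.

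The argument is then simply concatenation. Start from the sequence $\{\beta_{ij}(P/\Lex(I))\}$ of graded Betti numbers of $P/\Lex(I)$. Apply the sequence of zero consecutive cancellations provided by Peeva's result to reach the sequence $\{\beta_{ij}(P/I^*)\} = \{\beta_{ij}(gr_{\m}(A))\}$. Then, continuing from that same (now purely numerical) sequence, apply the sequence of negative consecutive cancellations provided by Theorem \ref{asli} to reach the sequence of total Betti numbers $\{\beta_i(R/I)\}$. The composite is a single finite sequence of consecutive cancellations, each of which is either zero or negative, carrying the Betti numbers of $P/\Lex(I)$ to those of $R/I$. This is exactly the assertion of Theorem \ref{asli2}.

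One small point worth spelling out: Peeva's cancellations live in the bigraded (homological degree, internal degree) data, while the cancellations of Theorem \ref{asli} as stated act on the sequence $\{\beta_i = \sum_j \beta_{ij}\}$ — but this is consistent, because a zero consecutive cancellation $(\beta_{ij}, \beta_{i-1,j})$ in the graded data induces, upon totaling, precisely an $i$ zero consecutive cancellation in the sense of the definition given in Section 3 (the indices $j \le j'$ with $j = j'$), and likewise Theorem \ref{asli} is already phrased with the bookkeeping on $\{c_i\}$ and $\{c_{ij}\}$ in place. So no mismatch of formalism arises; one merely records the internal degrees throughout Peeva's phase and then forgets them for Theorem \ref{asli}'s phase. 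I do not anticipate a genuine obstacle here — the content of the theorem is entirely contained in \cite{Pe} and Theorem \ref{asli}, and the only thing to verify is that the two cancellation processes compose, which is immediate since "a sequence of consecutive cancellations" is closed under concatenation by definition. The mild care needed is in the identification $\Lex(I) = \Lex(I^*)$ and in noting that after Peeva's phase the sequence we have reached is literally the Betti sequence of $gr_{\m}(A)$, which is the required input for Theorem \ref{asli}.
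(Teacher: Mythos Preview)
Your proposal is correct and follows exactly the paper's own approach: the paper states only that Theorem~\ref{asli2} is obtained ``combining Peeva's result and Theorem~\ref{asli}'' with no further argument, and you have simply written out that combination in detail. Your additional remarks on the compatibility of the two cancellation formalisms and on the identification $\Lex(I)=\Lex(I^*)$ are accurate and more explicit than anything the paper provides.
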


   \vskip 2mm
 In order to apply Theorem \ref{asli2}, we are interested in finding  the   zero and negative  consecutive cancellations  in $\{\beta_{i}(P/L)\}. $ For a monomial $m$, we set $\max(m)=\max\{p| x_p \ {\text divides }\  m\}.$ We can easily get  an extension  of Proposition 1.2  in  \cite{Pe} for testing also the existence of negative cancellations.

\begin{proposition}\label{Eliahou}
Let $L$ be a lexicographic  ideal of $P$. If an $i$ zero or negative consecutive cancellation in the sequence of $\{ \beta_i = \sum_j \beta_{ij}(P/L)\}$ is possible, then the following two conditions are satisfied:
\begin{enumerate}
\item $L$ has a minimal monomial generator $m$ with $\max(m)\geq i-1$.
\item $L$ has a minimal monomial generator $m'$ with $\max(m')\geq i$ and $\deg(m')<\deg(m).$
\end{enumerate}
\end{proposition}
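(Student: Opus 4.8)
The plan is to read off both conditions directly from the Eliahou--Kervaire resolution, in exactly the way Peeva argues for zero cancellations in Proposition 1.2 of \cite{Pe}; the only new ingredient is the bookkeeping of the homological twists, which is what lets us handle negative cancellations as well.

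First I would recall that the lexicographic ideal $L=\Lex(I)$ is stable (indeed strongly stable, in every characteristic, since replacing a variable dividing a monomial by a smaller-indexed one moves it up in the lex order), so the minimal graded free resolution of $P/L$ over $P$ is the Eliahou--Kervaire resolution. Writing $G(L)$ for the minimal monomial system of generators of $L$, this yields, for every $i\ge 1$,
$$\beta_{i,j}(P/L)=\sum_{\substack{m\in G(L)\\ \deg m=j-i+1}}\binom{\max(m)-1}{i-1},$$
while $\beta_{0,j}(P/L)$ is nonzero only for $j=0$. (The translation to remember is that a minimal generator of degree $d$ with $\max$ equal to $k$ contributes $\binom{k-1}{i-1}$ to $\beta_{i,\,d+i-1}(P/L)$.)

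Next I would unwind the definition of a consecutive cancellation: an $i$ zero or negative consecutive cancellation in the sequence $\{\beta_i=\sum_j\beta_{ij}(P/L)\}$ is possible exactly when there exist integers $j\le j'$ with $\beta_{i,j}(P/L)>0$ and $\beta_{i-1,j'}(P/L)>0$. For $i\le 1$ this never happens, since $\beta_{0,j'}(P/L)\neq 0$ forces $j'=0$ whereas $\beta_{1,j}(P/L)>0$ forces $j\ge 1$; so the statement is vacuous there and we may assume $i\ge 2$. From $\beta_{i-1,j'}(P/L)>0$ and the displayed formula, there is an $m\in G(L)$ with $\deg m=j'-i+2$ and $\binom{\max(m)-1}{i-2}>0$, hence $\max(m)-1\ge i-2$, i.e.\ $\max(m)\ge i-1$; this is condition (1). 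Likewise, from $\beta_{i,j}(P/L)>0$ there is an $m'\in G(L)$ with $\deg m'=j-i+1$ and $\binom{\max(m')-1}{i-1}>0$, hence $\max(m')\ge i$. Finally, $j\le j'$ gives $\deg m'=j-i+1\le j'-i+1<j'-i+2=\deg m$, so $\deg m'<\deg m$, which completes condition (2).

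I do not anticipate a real obstacle here: once the Eliahou--Kervaire Betti formula is in hand the argument is purely combinatorial. The only points that need a little care are keeping the twist--degree dictionary straight (so that the inequality $j\le j'$ is correctly converted into $\deg m'<\deg m$) and disposing of the degenerate range $i\le 1$, where no cancellation can occur.
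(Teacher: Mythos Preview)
Your proof is correct and follows essentially the same route as the paper: both arguments extract conditions (1) and (2) directly from the Eliahou--Kervaire resolution of $P/L$, the only cosmetic difference being that you state the resulting Betti number formula $\beta_{i,j}(P/L)=\sum_{m}\binom{\max(m)-1}{i-1}$ explicitly, whereas the paper works with the underlying basis $\{e_{(m;j_1,\dots,j_{i-1})}\}$. Your explicit disposal of the degenerate range $i\le 1$ is a small addition not spelled out in the paper.
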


\begin{proof}
Let $G(L)$ be the set of (unique) minimal monomial generators of $L$. The minimal  graded free resolution of $P/L$ is provided by a construction of Eliahou and Kervaire \cite{EK}. The resolution has basis
\begin{equation} \label{ElKe}\{e_{(m; j_1, . . . , j_{i-1})} | m\in G(L), 1\leq j_1 < . . . < j_{i-1} < \max(m)\} \end{equation}
in homological degree $i$ and $e_{(m; j_1, . . . , j_{i-1})}$ has
degree $i-1+\deg(m)$. Thus, the first condition is equivalent to
$\beta_{i-1, i-2+\deg(m)}\neq 0$ for some monomial $m$ in $G(L)$,
and the second condition is equivalent to $\beta_{i,
i-1+\deg(m')}\neq 0$ for some monomial $m'$ in $G(L)$ such that
$\deg(m')<\deg(m).$ Both conditions together are equivalent to the
fact that the degree matrix $U_i$ of $P/L$ has a negative or zero
entry.
\end{proof}

The next corollaries are easy applications of Theorem \ref{asli2}.

\begin{corollary}\label{successive degree}
Let $I$ be an ideal in the regular local ring $(R,\n). $   If
$L=\Lex(I) $ is minimally generated   in two successive degrees,  then
$\beta_i(R/I)=\beta_i(P/I^*)$ for each $i\geq 0$.
\end{corollary}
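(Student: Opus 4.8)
The plan is to apply Theorem~\ref{asli} to the $R$-module $A=R/I$ equipped with its $\m$-adic filtration, for which $gr_{\m}(A)=P/I^*$. That theorem says that $\{\beta_i(R/I)\}$ is obtained from $\{\beta_i(P/I^*)\}$ by a (possibly empty) sequence of negative consecutive cancellations; since the first cancellation of any nonempty such sequence would be an admissible negative consecutive cancellation of $\{\beta_i(P/I^*)\}$ --- i.e.\ there would be an index $i$ and integers $j<j'$ with $\beta_{ij}(P/I^*)>0$ and $\beta_{i-1,j'}(P/I^*)>0$ --- it is enough to show that this cannot occur. To do so I need to locate the nonzero graded Betti numbers of $P/I^*$, and the hypothesis on $L=\Lex(I)$ pins them down once combined with the inequality $\beta_{ij}(P/I^*)\le\beta_{ij}(P/L)$ of Bigatti, Hulett and Pardue recalled in the introduction (here $L=\Lex(I^*)$, because $L$ depends only on the Hilbert function of $P/I^*=gr_{\m}(A)$).

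Concretely, I would argue as follows. Write $d$ and $d+1$ (with $d\ge 1$) for the two successive degrees in which $L$ is minimally generated. In the Eliahou--Kervaire resolution of $P/L$ recalled in the proof of Proposition~\ref{Eliahou}, a basis element $e_{(m;\,j_1,\dots,j_{i-1})}$ (see \eqref{ElKe}) in homological degree $i\ge 1$ has internal degree $i-1+\deg(m)$ with $m\in G(L)$; hence $\beta_{ij}(P/L)=0$ whenever $i\ge 1$ and $j\notin\{\,i-1+d,\ i+d\,\}$, and $\beta_{0j}(P/L)=0$ for $j\ne 0$. By the displayed inequality the same vanishing is inherited by $P/I^*$. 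Now if $\beta_{ij}(P/I^*)>0$ and $\beta_{i-1,j'}(P/I^*)>0$, then for $i=1$ one has $j'=0$ and $j\ge d\ge 1$, while for $i\ge 2$ one has $j\ge i-1+d$ and $j'\le i-1+d$; in both cases $j\ge j'$, so $j<j'$ is impossible and no negative consecutive cancellation of $\{\beta_i(P/I^*)\}$ is available. Theorem~\ref{asli} then forces $\beta_i(R/I)=\beta_i(P/I^*)$ for every $i\ge 0$.

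I do not expect a genuine obstacle, but the one subtlety to respect is that the argument has to pass through $P/I^*$ rather than through $P/L$ directly: even when $L$ is minimally generated in two successive degrees, its resolution may still admit \emph{zero} consecutive cancellations, so $\beta_i(P/L)$ and $\beta_i(P/I^*)$ need not coincide, and Theorem~\ref{asli2} by itself would not identify $\beta_i(R/I)$. As an alternative route, the same degree bound shows that every degree matrix $U_j$ of $P/I^*$ is non-negative, so the statement would also follow at once from Proposition~\ref{degree} applied to the $R$-free resolution of $A$ built from a minimal graded free resolution of $P/I^*$.
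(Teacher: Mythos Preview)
Your proof is correct and follows essentially the same route as the paper: both use the Eliahou--Kervaire shape of the resolution of $L$ to see that in homological degree $i$ only the shifts $a+i-1$ and $a+i$ can occur, and then conclude via Theorem~\ref{asli} that no negative consecutive cancellation is available for $P/I^*$. You are more explicit than the paper in invoking the Bigatti--Hulett--Pardue bound to transfer the vanishing $\beta_{ij}(P/L)=0$ to $\beta_{ij}(P/I^*)=0$, a step the paper leaves tacit; your remark about the alternative via Proposition~\ref{degree} is also sound.
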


\begin{proof}
Assume  that $L$ is generated in degrees $a$ and $a+1$. By the proof of Proposition \ref{Eliahou}, the minimal free resolution of $P/L$ has the following shape :
\begin{eqnarray*}
& 0 &\to P^{\beta_{h, a+h-1}}(-a-h+1)\oplus P^{\beta_{h,a+h}}(-a-h)\to ...\to P^{\beta_{2, a+1}}(-a-1)\oplus P^{\beta_{2,a+2}}(-a-2) \\ && \to P^{\beta_{1,a}}(-a)\oplus P^{\beta_{1,a+1}}(-a-1)\to P\to 0.
\end{eqnarray*}

 Therefore the only possible cancellations  in the Betti numbers of $P/L$ are zero consecutive cancellations, hence by Theorem \ref{asli}  the conclusion follows.
\end{proof}

 \vskip 3mm

In the following $ \mu(\ \ ) $ will denote the minimal number of
generators. The next corollary extends to the local case a recent
result by  Hibi and Murai  in \cite{HM}.

\begin{corollary}\label{Lexi}
Let $I\subseteq \n^2 $ be a non-zero  ideal of the  regular local
ring $(R,\n)$ of dimension $n.$  Assume $\mu(\Lex(I))\leq n, $ then

\begin{enumerate}
\item $\dim(R/I) =n-1$.
\item $depth(R/I)=depth(P/I^*)=depth(P/\Lex(I))=n-\mu(\Lex(I))$.
\item $\mu(\Lex(I))=pd(R/I).$
\item  $\beta_h(R/I)=\beta_h(P/I^*)=\beta_h(P/\Lex(I))=1 $ where   $h=pd(R/I). $
\end{enumerate}
\end{corollary}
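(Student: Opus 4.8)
The plan is to extract everything from the Eliahou--Kervaire resolution of $L=\Lex(I)$ together with Theorem \ref{asli2}, so the work is essentially a careful bookkeeping of which cancellations can occur. First I would pin down the shape of $L$. Since $I\subseteq\n^2$ is nonzero, $I^*$ is a nonzero ideal of $P$ contained in $\m^2$ (here $\m$ denotes the irrelevant maximal ideal of $P$), so $L$ is a nonzero lexicographic ideal with no linear generators; write $G(L)=\{m_1,\dots,m_\mu\}$ with $\mu=\mu(\Lex(I))\le n$. The key combinatorial fact about a lex ideal with $\mu\le n$ generators is that the generators can be ordered so that $\max(m_1)\le\max(m_2)\le\cdots\le\max(m_\mu)$ and in fact $\max(m_j)\le j$ for all $j$ after reindexing --- equivalently, at most $k$ of the $\max(m_j)$ can exceed any given value $n-k$. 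I would prove this directly from the definition of a lex segment (a generator involving $x_p$ forces, by lex-ness, all ``earlier'' pure powers $x_1^{a},\dots$ to already lie in the ideal in lower degree, controlling how many generators can reach high index). Granting this, the Eliahou--Kervaire basis \eqref{ElKe} shows $\pd(P/L)=\max_j(\max(m_j))+1\le\mu\le n$; but also $\pd(P/L)=n-\depth(P/L)\ge n-\mu$ by the Auslander--Buchsbaum formula, and one checks $\pd(P/L)\ge\mu$ because each generator $m_j$ with $\max(m_j)=j$ contributes the basis element $e_{(m_j;1,2,\dots,j-1)}$ in homological degree $j$, and the top one in degree $\mu$; combining, $\pd(P/L)=\mu$ and $\depth(P/L)=n-\mu$.

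Next I would compute the top homological Betti number of $P/L$. From the EK basis, $\beta_{\mu}(P/L)=\#\{(m;j_1<\cdots<j_{\mu-1}<\max(m)) : m\in G(L)\}$, and since $\max(m_j)\le j\le\mu$ with equality forced only for $j=\mu$, the only generator that can produce a basis element in homological degree $\mu$ is $m_\mu$, and for it the index set $j_1<\cdots<j_{\mu-1}<\max(m_\mu)=\mu$ is uniquely $\{1,2,\dots,\mu-1\}$. Hence $\beta_{\mu}(P/L)=1$, and it sits in internal degree $\mu-1+\deg(m_\mu)$. This already gives claim (4) for $P/L$. For the depth and projective dimension statements I also need $\depth(P/I^*)$ and $\depth(R/I)$: here I invoke that by Theorem \ref{asli2} the total Betti numbers of $R/I$ are obtained from those of $P/L$ by (negative and zero) consecutive cancellations, so $\pd(R/I)\le\pd(P/L)=\mu$; and since a consecutive cancellation never removes the \emph{unique} generator of $\beta_\mu(P/L)=1$ without also needing a matching entry in $\beta_{\mu-1}$, I must check that no such cancellation in homological degree $\mu$ is available --- this is exactly where Proposition \ref{Eliahou} enters.

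So the heart of the argument is: a cancellation in homological degree $\mu$ is impossible, hence $\beta_\mu(R/I)=\beta_\mu(P/I^*)=1\ne0$, forcing $\pd(R/I)=\pd(P/I^*)=\mu=\pd(P/L)$, and then Auslander--Buchsbaum gives the common depth $n-\mu$, proving (2) and (3); claim (1) follows since $\depth(R/I)=n-\mu>0$ forces $\dim(R/I)\ge 1$ while $I\ne 0$ and $R$ regular of dimension $n$ with $\pd(R/I)=\mu\le n$ via Auslander--Buchsbaum actually pins $\dim(R/I)$ down --- I would argue $\dim R/I\le n-1$ because $I\ne 0$, and $\dim R/I\ge\depth R/I=n-\mu\ge n-n$, but to get exactly $n-1$ I use that $\mu\le n$ together with the fact that the top of the resolution of $P/L$ forces the codimension of $L$, hence of $I^*$, hence of $I$, to be at most $\ldots$; more cleanly, $\mathrm{codim}(P/L)\le\mu$ and one shows a lex ideter with $\mu\le n$ non-linear generators has height exactly $\ldots$, giving $\dim=n-1$. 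To rule out a cancellation in degree $\mu$: by Proposition \ref{Eliahou}, an $i$-cancellation with $i=\mu$ requires a minimal generator $m$ with $\max(m)\ge\mu-1$ \emph{and} a minimal generator $m'$ with $\max(m)\ge\mu$ and $\deg(m')<\deg(m)$; the second condition needs two distinct generators reaching indices $\mu-1$ and $\mu$ respectively, i.e. $\max(m_{\mu-1})=\mu-1$ and $\max(m_\mu)=\mu$ with $\deg(m_\mu)<\deg(m_{\mu-1})$. But lex-ness forces the generator of larger $\max$ to have \emph{larger-or-equal} degree (a lex ideal's generators, ordered by $\max$, have non-decreasing degree --- again a short lex-segment argument), contradicting $\deg(m_\mu)<\deg(m_{\mu-1})$. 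Hence no degree-$\mu$ cancellation, the top Betti number survives, and all four claims follow.

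I expect the main obstacle to be the two purely combinatorial lemmas about lex ideals with $\mu\le n$ generators: (a) they can be ordered with $\max(m_j)\le j$, and (b) ordering by $\max$ gives non-decreasing degrees. Both are elementary but need the right formulation of ``lex segment''; once they are in hand the homological conclusions are immediate from Eliahou--Kervaire, Proposition \ref{Eliahou}, Theorem \ref{asli2}, and Auslander--Buchsbaum.
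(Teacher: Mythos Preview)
Your approach is essentially the paper's: use the combinatorial structure of a lex ideal with at most $n$ generators to pin down the tail of its Eliahou--Kervaire resolution, observe that no (zero or negative) cancellation is possible in the top homological degree, and conclude via Theorem~\ref{asli2} and Auslander--Buchsbaum. The paper short-circuits your combinatorial lemmas by quoting Hibi--Murai's explicit description $m_k=x_1^{s_1}x_2^{s_2}\cdots x_k^{s_k+1}$ (so in fact $\max(m_k)=k$, not merely $\le k$ as in your lemma~(a), and $L\subseteq(x_1)$, which immediately gives $\dim=n-1$ and closes the gap in your dimension argument), and then writes out the last two terms of the resolution explicitly, from which the strict positivity of the last degree-matrix---hence the impossibility of a top cancellation---is read off directly rather than through Proposition~\ref{Eliahou}.
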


\begin{proof}
Let $G=\{ m_1,m_2,...,m_h\}$ be the (unique) minimal monomial
generating set for the monomial ideal $\Lex(I)$ in $P= k[x_1,\dots,
x_n] $ where $1< \deg(m_1)\leq \deg(m_2)\leq ...\leq \deg(m_h)$ and
where $m_{i+1}<_{\Lex} m_i$ if $\deg(m_i)=\deg(m_{i+1})$. By
considering $x_1>x_2> \cdots $ Hibi and Murai in Proposition 1.2 of
\cite{HM} have showed that  for every $k \le n $  we have
$m_k=x_1^{s_1}x_2^{s_2}...x_k^{s_k+1}$  where $s_1 = \deg m_1 -
1\geq 1$ and $s_i = \deg m_i - \deg m_{i-1}$ for $i=2,3,...,h$. So
clearly $\Lex(I) \subseteq ( x_1) $ and hence $\dim(P/\Lex(I))=n-1$.
By the Eliahou-Kervaire resolution \cite{EK}, the tail  of the
minimal free resolution of $P/\Lex(I)$ is:
\begin{equation} \label{last betti}0\to P(-h-\deg(m_h)+1)\to P^{h-1}(-h-\deg(m_h)+2)\oplus P(-h-\deg(m_{h-1})+2). \end{equation} The conclusion follows now by  Theorem \ref{asli} and Theorem \ref{asli2}.
\end{proof}
\vskip 3mm

 The  following result demonstrates an application in studying the admissible Hilbert functions of particular classes of local rings.

\begin{corollary}\label{gorenstein}
Let $\{(1,n,h_2,...,h_t,1,...,1,0,0, \dots)\} $ be the Hilbert function of an Artinian Gorenstein local ring $A=R/I$. If $t\geq n$ then $h_t\leq n.$
\end{corollary}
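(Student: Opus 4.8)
The plan is to argue by contradiction: assume $h_t\ge n+1$ and deduce that the minimal $R$-free resolution of $A=R/I$ has at least two free summands in homological degree $n$, which is impossible. Indeed, $A$ is Artinian, so $\pd_R A=n$ by Auslander--Buchsbaum ($\depth A=0$ and $R$ is regular of dimension $n$), and since $A$ is Gorenstein its Cohen--Macaulay type is $1$, i.e.\ $\beta_n(R/I)=1$. (We may assume $n\ge 2$; the cases $n\le 1$ are immediate.) Let $s$ be the socle degree of $A$; from the shape of the Hilbert function we have $h_{t+1}=\dots=h_s=1$, $h_{s+1}=0$, and $h_t\ge n+1\ge 2>1=h_s$ forces $t<s$. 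Put $L=\Lex(I)\subseteq P=k[x_1,\dots,x_n]$ with $x_1>\dots>x_n$, so that $P/L$ has the same Hilbert function as $A$. By Theorem \ref{asli2}, the sequence $\{\beta_i(R/I)\}$ is obtained from the graded Betti numbers $\{\beta_{ij}(P/L)\}$ by a sequence of zero and negative consecutive cancellations; a cancellation at homological index $n$ removes a summand $P(-j)$ of $F_n$ together with a summand $P(-j')$ of $F_{n-1}$ with $j\le j'$, and no cancellation occurs at index $n+1$ since $\beta_{n+1}(P/L)=0$.

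The key computation exploits that $L$ is a lex ideal: whenever $t+1\le d\le s$ we have $h_d=1$, hence $L_d=P_d\setminus\{x_n^d\}$, where $x_n^d$ is the lex-smallest monomial of degree $d$. From this I would record three facts about the minimal monomial generating set $G(L)$. First, the minimal generators of $L$ of degree $t+1$ divisible by $x_n$ are exactly the monomials $v\,x_n$ with $v\in P_t\setminus L_t$ and $v\ne x_n^t$; there are $h_t-1$ of them (one fewer than $\#(P_t\setminus L_t)=h_t$, since $x_n^t$ must be discarded). Second, $x_n^{s+1}$ is a minimal generator of $L$ divisible by $x_n$ (it lies in $L_{s+1}=P_{s+1}$ while $x_n^{s+1}/x_n=x_n^s\notin L_s$). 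Third, $x_n^{s+1}$ is the \emph{only} minimal generator of $L$ of degree $>t+1$: in degrees $\ge s+2$ there are no new generators because $L\supseteq(x_1,\dots,x_n)^{s+1}$, and if $u$ were a minimal generator of degree $d$ with $t+2\le d\le s+1$, then $u/x_{\max(u)}$ would be the only monomial absent from $L_{d-1}$, namely $x_n^{d-1}$, so $u=x_n^{d-1}x_{\max(u)}$, which is impossible when $\max(u)<n$ and gives $u=x_n^d\notin L_d$ when $\max(u)=n$ (unless $d=s+1$, in which case $u=x_n^{s+1}$). Feeding \emph{(i)}--\emph{(iii)} into the Eliahou--Kervaire description (basis $e_{(m;j_1,\dots,j_{i-1})}$ in homological degree $i$, with $j_1<\dots<j_{i-1}<\max(m)$ and internal degree $i-1+\deg(m)$) yields $\beta_{n,\,n+t}(P/L)=h_t-1$, $\beta_{n,\,n+s}(P/L)=1$, and $\beta_{n-1,d}(P/L)=0$ for every $d\ge n+t$ except $\beta_{n-1,\,n+s-1}(P/L)=n-1$ (the contribution of $x_n^{s+1}$, using $n+s-1\ge n+t$).

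It remains to count cancellations at index $n$. Removing one of the $h_t-1$ copies of $P(-(n+t))$ in $F_n$ requires a partner summand $P(-j')$ of $F_{n-1}$ with $j'\ge n+t$; the only such summands are the $n-1$ copies of $P(-(n+s-1))$, so at most $n-1$ of those copies can disappear, and since $h_t-1\ge n$ at least $h_t-n\ge 1$ of them survive every admissible sequence of cancellations. Moreover the summand $P(-(n+s))$ of $F_n$ cannot be cancelled at all, because $F_{n-1}$ has no summand $P(-j')$ with $j'\ge n+s$. As $t<s$, these surviving summands lie in two distinct internal degrees, so homological degree $n$ retains at least $(h_t-n)+1\ge 2$ free summands; hence $\beta_n(R/I)\ge 2$, contradicting the Gorenstein property. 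Therefore $h_t\le n$.

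I expect the delicate point to be the third fact above: verifying that on the plateau $h_{t+1}=\dots=h_s=1$ the lex ideal $L$ acquires no new minimal generator in degrees $t+2,\dots,s$ and exactly one, $x_n^{s+1}$, in degree $s+1$. Once this combinatorial picture of $G(L)$ is secured, the Eliahou--Kervaire bookkeeping of $\beta_{n,\bullet}(P/L)$ and $\beta_{n-1,\bullet}(P/L)$ and the cancellation count are routine; the inequality $h_t\ge n+1$ is used only to guarantee that the number $h_t-1$ of degree-$(t+1)$ generators divisible by $x_n$ strictly exceeds the number $n-1$ of partners that $x_n^{s+1}$ supplies in $F_{n-1}$.
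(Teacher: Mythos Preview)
Your proof is correct and follows essentially the same route as the paper: assume $h_t\ge n+1$, use the Eliahou--Kervaire resolution of $P/\Lex(I)$ to identify the summands of $F_n$ in internal degrees $n+t$ and $n+s$, observe that the only summands of $F_{n-1}$ in internal degree $\ge n+t$ are the $n-1$ copies contributed by $x_n^{s+1}$, and conclude from Theorem~\ref{asli2} that $\beta_n(R/I)\ge 2$, contradicting Gorensteinness. The paper exhibits the specific generators $m_{h-i}=x_{n-1}^i x_n^{t-i+1}$ (\,$1\le i\le n$\,), yielding $\beta_{n,n+t}\ge n$, while you compute $\beta_{n,n+t}=h_t-1$ exactly; your facts (i)--(iii) spell out what the paper leaves as ``one can check,'' but the argument is the same.
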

\begin{proof}
Suppose that $h_t>n$. Let $G=\{ m_1,...,m_h\}$ be the (unique)
minimal monomial generating set of  the monomial ideal $\Lex(I)$
where $\deg(m_1)\leq \deg(m_2)\leq ...\leq \deg(m_h)$ and where
$m_{i+1}<_{\Lex} m_i$ if $\deg(m_i)=\deg(m_{i+1})$. One can check
that $m_h=x_n^{s+1}$, where $s$ is the degree of the $h-$vector, and
for $1\leq i\leq n$, $m_{h-i}=x_{n-1}^ix_n^{t-i+1}$. Again by the
Eliahou-Kervaire resolution,  the tail of the minimal free
resolution of $P/\Lex(I)$ is:
 \begin{eqnarray*} 0 &\to &\oplus_{j<n+t}P^{\beta_{n, j}}(-j)\oplus P^{\beta {n, n+t}}(-n-t)\oplus P(-n-s)\to \\ &&
\oplus_{j<n+t-1}P^{\beta_{n-1, j}}(-j)\oplus P^{\beta {n-1, n+t-1}}(-n-t+1)\oplus P^{n-1}(-n-s+1),\end{eqnarray*}
where $\beta_{n, n+t}\geq n.$ By Theorem \ref{asli2}  we get $\beta_n(R/I)>1$ which is a contradiction.
\end{proof}

\vskip 2mm
For instance, the above result says    that $
\{(1,3,4,4,1,1,1)\} $ cannot be the Hilbert function of any Artinian
Gorenstein local ring $A=K[[x,y,z]]/I$.
\\ In fact, $L=\Lex(I) $ should be  the ideal   $L=( x^2,xy,x^2z,xz^2,xyz,y^4,y^3z,y^2z^2,yz^3,z^7). $ The minimal free resolution of $P/L$ is:
\begin{eqnarray*}
& {\bf{G}}: 0& \to P(-5)\oplus P^3(-6)\oplus P(-9) \to P(-3)\oplus
P^2(-4)\oplus P^7(-5)\oplus P^2(-8) \to \\ && P^2(-2)\oplus
P(-3)\oplus P^4(-4)\oplus P(-7) \to P.
\end{eqnarray*}
However, if we  consider any sequence of  zero and  negative consecutive cancellations,
we get  $\beta_3(A)\geq 2$ which contradicts  the assumption that
$A$  is Gorenstein. \vskip 2mm
\bigskip

  We present now  an  investigation  in {\it codimension  two}. Assume $I$ is an ideal of a regular local ring $(R, \n) $ of dimension two such that $A=R/I$ is Artinian.

  By Macaulay's
Theorem, for some integer $d$ one has that the Hilbert function of $A$ is
 $$ \{(1,2,...,d,h_d,...,h_s,0,0,\dots)\}$$  where $s$ is the socle degree and
 $ d=h_{d-1}\geq h_d \geq h_{d+1} \geq ...\geq h_s \geq 1.$
We consider the corresponding lexicographic ideal $L$ in $P=k[x,y].$  It can be written as
$$L=( x^d,x^{d-1}y^{k_1},....,y^{k_d}) $$ with $0=k_0<k_1<...<k_d.$
For every $j>0$ define  $$e_j:=|h_j-h_{j-1}|.$$ We consider $h_j=j+1 $ if $j\le d $ and $h_j=0 $ if $j >s.$ It is easy to see
that the minimal number of generators of degree $d$ of $L$ is
$e_d+1$  and, for $j>d$, the minimal number of generators of degree
$j$ of $L$ is $e_j$. Notice that  $\sum_{j \ge d } e_j=d.$ The
minimal free resolution of $P/L$ is given by $\  0\to F_2 \to F_1
\to P \to P/L \to 0 $ where $ F_2 $ and $ F_1$  have  respectively
rank $d $ and $d+1. $ In particular,
\begin{equation} \label{F}  F_2= \oplus_{j\ge 0} P^{e_{d+j}}(-d-j-1)  \ \ \mbox{and} \ \ F_1= P^{e_{d}+1}(-d)   \oplus_{j\ge 1} P^{e_{d+j}}(-d-j ) .\end{equation}
 
As a consequence of Theorem \ref{asli2} we can present an easy proof  of  a result   stated by  Macaulay, later proved by Iarrobino in \cite{I} and by Bertella in \cite{B} with different methods and   technical devices.
\begin{corollary}
Let $ \{(1,2,...,d,h_d,...,h_s,0,0,\dots)\}$  be the Hilbert function of an Artinian  Gorenstein
  local ring $A=R/I$, then    $e_j \leq 1$ for
every $j>0. $
\end{corollary}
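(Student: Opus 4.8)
The plan is to combine the structure of the minimal free resolution of $P/L$ given in (\ref{F}) with Theorem \ref{asli2}, using the fact that a Gorenstein Artinian quotient of a $2$-dimensional regular local ring has projective dimension $2$ and Cohen--Macaulay type $1$, i.e. $\beta_2(R/I)=1$. First I would set up the numerics: write $L=(x^d,x^{d-1}y^{k_1},\dots,y^{k_d})$ and recall from the discussion preceding the statement that $\sum_{j\ge d}e_j=d=\operatorname{rank}F_1-1=\operatorname{rank}F_2$, where $F_2=\oplus_{j\ge 0}P^{e_{d+j}}(-d-j-1)$ and $F_1=P^{e_d+1}(-d)\oplus_{j\ge 1}P^{e_{d+j}}(-d-j)$. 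Since $\beta_2(P/L)=\sum_{j\ge 0}e_{d+j}=d$ and $\beta_2(R/I)=1$ (Gorenstein, codimension two, so the resolution of $A$ is the Hilbert--Burch resolution $0\to R\to R^{d}\to R^{d+1}\to\cdots$ — wait, type one forces $\beta_2=1$), we must pass from total Betti number $d$ in homological degree $2$ down to total Betti number $1$ by a sequence of zero and negative consecutive cancellations, i.e. $d-1$ cancellations in degree $2$, each one also lowering $\beta_1$ by one.

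Next I would argue by contradiction: suppose $e_j\ge 2$ for some $j>0$. The key point is to see which cancellations are \emph{available} in $\{\beta_i(P/L)\}$. A zero or negative consecutive cancellation in homological degree $2$ pairs a generator of $F_2$ (a shift $P(-d-j-1)$, $j\ge 0$) with a generator of $F_1$ (a shift $P(-d)$ or $P(-d-j')$, $j'\ge 1$) whose degree is \emph{not larger}; since every generator of $F_2$ has degree $\ge d+1$ and every generator of $F_1$ has degree $\le d+j'$ for the appropriate index, and there are exactly $d+1$ generators of $F_1$ and $d$ of $F_2$, one can perform at most $d$ such cancellations and the resulting sequence of degrees is forced. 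The crucial combinatorial observation is that after performing $d-1$ cancellations in homological degree $2$ (the number required to reach $\beta_2=1$), one necessarily also kills $d-1$ of the $d+1$ summands of $F_1$, leaving $\beta_1(R/I)=2$; but then the surviving rank-$2$ module $F_1$ maps onto $R^1$ in degree $2$ from a rank-$1$ module, and the Hilbert--Burch theorem (or a direct rank count: $\operatorname{rank}\delta_1=\beta_1-\beta_0$ etc.) forces $\mu(I)=\beta_1(R/I)=2$. Then $A=R/I$ is a codimension-two complete intersection, hence Gorenstein with socle degree $k_1+k_2-2$ and a very rigid Hilbert function; comparing with the assumed Hilbert function, having some $e_j\ge2$ is impossible because for a complete intersection the first difference of the $h$-vector takes only values in $\{0,1,-1\}$ — wait, that is essentially what we want to prove, so instead I would directly contradict the shape (\ref{F}): if $e_j\ge 2$ for some $j$, then $F_2$ has two generators of the same degree $d+j+1$ and $F_1$ has two generators of degree $d+j$, but more usefully the total count forces, after the mandatory $d-1$ cancellations, a leftover $\beta_2\ge 2$ because the degree constraints prevent cancelling enough summands — I will make this precise by showing the "cancellation graph" between the $d$ summands of $F_2$ and the $d+1$ summands of $F_1$ is a path (by the inequalities $\deg$ vs.\ $\deg$ and the strict ordering $k_0<k_1<\cdots<k_d$), so the only way to get $\beta_2=1$ is to leave exactly two consecutive $F_1$-summands, which happens iff every $e_j\le 1$.

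More concretely, the mechanism I expect to use is: list the generators of $F_1$ by degree as $d\le d+j_1\le d+j_2\le\cdots$ (with multiplicities $e_d+1,e_{d+1},\dots$) and those of $F_2$ by degree $d+1\le d+j_1'+1\le\cdots$; a consecutive cancellation is legal iff it matches an $F_2$-summand of degree $\delta$ with an $F_1$-summand of degree $\le\delta$. Because consecutive degrees differ by exactly the $e_j$'s and $\sum e_j=d$, an easy interlacing argument shows the bipartite "legality" relation is exactly a staircase; performing the maximal $d$ cancellations is possible iff we can match all $d$ summands of $F_2$ with $d$ of the $d+1$ summands of $F_1$, and the residual single $F_2$-summand of degree $d+s+1$ must pair off leaving the two top $F_1$-summands — but we only need $d-1$ cancellations. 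The main obstacle is precisely this bookkeeping: I need to show that \emph{no} sequence of $d-1$ zero/negative cancellations can bring $\beta_2$ to $1$ unless the shifts are already "thin", i.e.\ $e_j\le1$; equivalently, if some $e_j\ge2$ then after any $d-1$ cancellations at least two $F_2$-summands survive, forcing $\beta_2(R/I)\ge2$, contradicting Gorensteinness. I would finish by noting that when all $e_j\le1$ the resolution (\ref{F}) is already "multiplicity-free" in each degree and there is nothing to prove, so the converse direction is the content. The one genuinely delicate point, which I would isolate as a small lemma, is the claim that the legality relation between the two sets of shifts is a staircase/path, so that the number of summands of $F_2$ that \emph{cannot} be cancelled is at least $\#\{j:e_j\ge2\}$ (counted with the excess $e_j-1$); granting that lemma, $\beta_2(R/I)\ge 1+\sum_{j>0}(e_j-1)$, and Gorensteinness ($\beta_2(R/I)=1$) gives $\sum_{j>0}(e_j-1)\le 0$, i.e.\ $e_j\le1$ for all $j>0$.
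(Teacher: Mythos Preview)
Your approach is the paper's in spirit---use Theorem~\ref{asli2}, the Gorenstein hypothesis $\beta_2(R/I)=1$, and the explicit shifts~(\ref{F})---but the execution has a real gap. You never prove the ``staircase/path'' lemma on which everything rests, and your final inequality $\beta_2(R/I)\ge 1+\sum_{j>0}(e_j-1)$ is wrong as written: most $e_j$ vanish, so the right side can be negative; even with positive parts the bound is stronger than what you need and is not obviously true in this generality. You also never invoke $h_s=1$, which is the one place Gorensteinness enters the shift count (since $\m^s\subseteq 0:\m$, one has $h_s\le\dim_k(0:\m)=1$, hence $e_{s+1}=1$); without it the top multiplicity in $F_2$ need not be~$1$.

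The clean version of what you are reaching for---and exactly what the paper does---is a top-down greedy peel. From $e_{s+1}=1$ and $e_i=0$ for $i\ge s+2$ one has
\[
F_2=P(-s-2)\oplus\bigoplus_{d\le i\le s}P^{e_i}(-i-1),\qquad
F_1=P^{e_d+1}(-d)\oplus\bigoplus_{d<i\le s}P^{e_i}(-i)\oplus P(-s-1).
\]
Since the largest shift in $F_1$ is $s+1$, the summand $P(-s-2)$ of $F_2$ admits no zero or negative cancellation and must be the unique survivor; every other summand of $F_2$ must cancel. Let $t\le s$ be the largest index with $e_t\neq 0$. The $e_t$ copies of $P(-t-1)$ in $F_2$ can only cancel against $F_1$-summands of shift $\ge t+1$; by the choice of $t$ the only such summand is the single $P(-s-1)$, so $e_t=1$. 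Delete this matched pair and repeat: if $t'<t$ is now the largest index with $e_{t'}\neq 0$, the only remaining $F_1$-shift $\ge t'+1$ is the single $P(-t)$ (multiplicity $e_t=1$), forcing $e_{t'}=1$; and so on. After $d-1$ steps one gets $e_j\le 1$ for all $j$.

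This inductive peel \emph{is} the proof of your unstated lemma in the case at hand; replace the vague bipartite-matching claim with it. The detour through ``$A$ is a codimension-two complete intersection, hence the first difference of the $h$-vector lies in $\{-1,0,1\}$'' is circular, as you noticed yourself, and should be dropped.
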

\begin{proof}
Let  $L$ be  the
  lexicographic ideal of $I$. With the above notations, we have
 $$ F_2= \oplus_{j\ge 0} P^{e_{d+j}}(-d-j-1)  \ \ \mbox{and} \ \ F_1= P^{e_{d}+1}(-d)   \oplus_{j\ge 1} P^{e_{d+j}}(-d-j )  $$
 with $e_{s+1}=1 $ ($h_s=1$) and $e_i=0 $ for $i\ge s+2.$
Since $R/I$  is Gorenstein,  $\beta_2(R/I)=1  $ and, by Theorem
\ref{asli2},  the   minimal free resolution of $R/I $ ($0 \to R \to
R^2 \to R $\ ) is obtained from the resolution of $L$ by a sequence
of zero and negative cancellations. Now $F_2= P(-s-2) \oplus
[\oplus_{d\le i \le s}  P^{e_i}(-i-1)].$ Since $P(-s-2) $  cannot be
cancelled,   we should cancel in $F_2$   the remaining  $
[\oplus_{d\le i \le s}  P^{e_i}(-i-1)]. $ Let $t \le  s  $ be  the
biggest integer such that $e_t \neq 0. $ The only chance  to cancel
$P^{e_t}(-(t+1) ) $ in $F_2$ is through $P(-s-1) $ in $F_1, $  hence
$e_t=1.$ Continuing this procedure $d-1$ times, we conclude that $
e_i \le 1 $ for every $i \ge 1.$

\end{proof}

    \vskip 2mm
 
 Following essentially the same idea as in \cite{B}, we can realize an ideal $I$ in $k[[x,y]] $ coming from any sequence of zero and negative cancellations in the resolution of $L=\Lex(I).  $

  \begin{remark} \label{due} {\rm  Let $0 \to F_2 \to F_1 \to L \to 0 $ be a minimal $P$-free resolution of $$L=( x^d,x^{d-1}y^{k_1},....,y^{k_d}) $$ with $0=k_0<k_1<...<k_d.$
 The matrix $M$ of the differential map $F_2 \to F_1$ is the well-known Hilbert-Burch matrix  whose  columns can be described as follows:
 \begin{equation} \label{HB} (\sigma_1^{tr},...,\sigma_d^{tr})\ \ \ {\text {where }}\ \ \sigma_i=(\stackrel{i-1}{\overbrace{{0,...,0}}},y^{k_i-k_{i-1}},-x,0,...,0)\ \ i=1,..,d. \end{equation}

  It  is clear that each cancellation in the resolution of $L$ corresponds   to an element in position
 $(i,j)$ in $M  $  with non-positive  value in the associated degree-matrix.  Consider  the sequence of cancellations corresponding to the elements in position $  \{(l_1,t_1),...,(l_k,t_k)\}   $ and let $I$ be the ideal of $R=k[[x,y]] $  generated by the maximal minors of the following  matrix
$$M' := M +(\alpha_{ij})_{\stackrel{ i=1,...,d+1}{j=1,...,d}}$$
where   $$\alpha_{ij}=\left\{%
\begin{array}{ll}
    1, & \hbox{if $(i,j)\in \{(l_1,t_1),...,(l_k,t_k)\}$;} \\
    0, & \hbox{otherwise.} \\
\end{array}%
\right.    $$ We can prove that $L=\Lex(I) $ and the corresponding
ideal $I^*$ follows by the above construction by performing  the
only  zero cancellations.  This construction holds in the general case of an Artinian local ring of codimension two, not necessarily Gorenstein,  by using the same procedure as in  \cite{B}, Theorem 2.4. }

\end{remark}

The next example realizes  the above construction.

\begin{example}  {\rm  Let $H=\{(1,2,3,4,3,3,3,2,2,1)\}, $ then $L=\langle
x^4,x^3y,x^2y^5,xy^8,y^{10}\rangle. $ The minimal free resolution of
$P/L$ is $$0 \to P(-5)\oplus P(-8)\oplus P(-10)\oplus P(-11) \to
P^2(-4)\oplus P(-7) \oplus P(-9)\oplus P(-10) \to P $$ The
resolution admits   two negative cancellations: $(P(-5),P(-7)),
(P(-8),P(-9))$ and the zero cancellation $(P(-10),P(-10)). $ Accordingly to Remark \ref{due},
the sequence obtained by the zero and negative cancellations     $0\to R\to
R^2\to R \    $   can be realized by the  ideal $I$ generated by the maximal minors of the following matrix
$$\left(%
\begin{array}{cccc}
  y & 0 & 0 & 0 \\
  -x & y^4 & 0 & 0 \\
  1 & -x & y^3 & 0 \\
  0 & 1 & -x & y^2 \\
  0 & 0 & 1 & -x \\
\end{array}%
\right).$$ We have  $I= \langle
x^4-x^2y^2-x^2y^3-x^2y^4+y^6,x^3y-xy^3-xy^4\rangle$ and
$L=\Lex(I).$   Furthermore,  the minimal free resolution of $P/I^*$ is obtained by performing the only zero cancellation:
 $$0 \to P(-5)\oplus P(-8)\oplus P(-11) \to
P^2(-4)\oplus P(-7) \oplus P(-9) \to P  $$ and  $I^*=
\langle x^4-x^2y^2,x^3y-xy^3,x^2y^5-y^7,xy^8\rangle$  is given by
the maximal minors of the matrix
$$\left(%
\begin{array}{cccc}
  y & 0 & 0 & 0 \\
  -x & y^4 & 0 & 0 \\
 0 & -x & y^3 & 0 \\
  0 & 0 & -x & y^2 \\
  0 & 0 & 1 & -x \\
\end{array}%
\right).$$
}

  \end{example}

 \end{document}